\theoremstyle{plain}
\newtheorem{thm}{Theorem}[section]
\newtheorem{lemma}[thm]{Lemma}
\newtheorem{prop}[thm]{Proposition}
\newtheorem{corollary}[thm]{Corollary}
\theoremstyle{definition}
\newtheorem{defi}[thm]{Definition}
\newtheorem{rmk}[thm]{Remark}
\newtheorem{example}[thm]{Example}
\DeclareMathOperator{\spec}{Spec}
\DeclareMathOperator{\Ima}{Im}
\DeclareMathOperator{\Hom}{Hom}
\DeclareMathOperator{\Lie}{Lie}
\DeclareMathOperator{\End}{End}
\newcommand{\id}{\mathrm{id}}
\newcommand{\G}{\mathbb{G}}
\newcommand{\F}{\mathbb{F}}
\newcommand{\V}{\mathbb{V}}
\newcommand{\A}{\mathbb{A}}
\newcommand{\thistheoremname}{}
\newtheorem*{genericthm*}{\thistheoremname}
\newenvironment{namedthm*}[1]
  {\renewcommand{\thistheoremname}{#1}%
   \begin{genericthm*}}
  {\end{genericthm*}}
\definecolor{bibi}{rgb}{0.79, 0.08, 0.48}
\definecolor{mame}{rgb}{0.0, 0.5, 0.5}
\title{Infinitesimal commutative unipotent group schemes with one-dimensional Lie algebra}
\date{}
\author{Bianca Gouthier\footnote{\noindent\textsc{Max Planck Institute for Mathematics, Vivatsgasse 7, 53111 Bonn, Germany}\\
\textit{Email Address:} \textbf{gouthier@mpim-bonn.mpg.de}}}
\begin{document}

\maketitle

\begin{abstract}
    We prove that over an algebraically closed field of characteristic $p>0$ there are exactly, up to isomorphism, $n$ infinitesimal commutative unipotent $k$-group schemes of order $p^n$ with one-dimensional Lie algebra, and we explicitly describe them. We consequently obtain an explicit description of all infinitesimal subgroup schemes of any supersingular elliptic curve over an algebraically closed field, recovering all their $p^n$-torsions as well. Finally, we use these results to answer a question of Brion on rational actions of infinitesimal commutative unipotent group schemes on curves.
    \end{abstract}

    \noindent\textbf{Acknowledgments.}
This work was done during my PhD at the Institut de Math\'ematiques de Bordeaux carried out under the supervision of Dajano Tossici, to whom I am very grateful. In addition, I thank Michel Brion and Matthieu Romagny for helpful comments and the anonymous referee for their remarks that helped improve this paper.
    
    \section{Introduction}

In general, it is not easy to describe explicitly infinitesimal commutative unipotent group schemes. For example, those that arise as the $p$-torsion of some abelian variety (with $p$-rank zero) are not fully understood and increase in complexity as the dimension grows. What is known is a description in terms of Dieudonné modules (see \cite[V.\textsection1, Theorem 4.3]{DG}), which is somehow more difficult to manipulate. Explicit descriptions of such group schemes, for instance in terms of their Hopf algebra, are useful in order to construct actions of these group schemes on varieties (see, for example, \cite[Proposition 3.17]{gouthier2023infinitesimal} or \cite[\textsection4.1]{Montgomery}).
The main result of this work is Theorem \ref{classification}, which gives an explicit description of all infinitesimal commutative unipotent $k$-group schemes with one-dimensional Lie algebra defined over an algebraically closed field $k$ of characteristic $p>0$, answering partially a question of Fakhruddin (see \cite[Remark 2.10]{Fakhruddin}). In this paper $W$ will denote the group scheme of Witt vectors over $k$, $W_n$ the $k$-group scheme of Witt vectors of length $\leq n$ and $W_n^m$ the kernel of the $m$-th iterate of the Frobenius morphism $F^m\colon W_n\rightarrow W_n$, for any $n,m\geq1$. 

\begin{thm}\label{classification}
Let $k$ be an algebraically closed field of characteristic $p>0$. For any $n\geq 1$, there are exactly, up to isomorphism, $n$ infinitesimal commutative unipotent $k$-group schemes of order $p^n$ and with one-dimensional Lie algebra. They are the group schemes of the form $$W_n^n[V-F^i]:= \ker(V-F^i\colon W_n^n\rightarrow W_n^n)$$ for some $i=1,\dots,n$. 
\end{thm}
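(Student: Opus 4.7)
My plan is to use covariant Dieudonn\'e theory: over the algebraically closed field $k$ of characteristic $p>0$ there is an equivalence between finite commutative $k$-group schemes of $p$-power order and finite-length left modules over the Dieudonn\'e ring $D_k = W(k)\langle F, V\rangle/(FV-p,\,VF-p)$, subject to $Fa = \sigma(a)F$ and $aV = V\sigma(a)$ for $a\in W(k)$, with $\sigma$ the lift of the Frobenius of $k$. In this dictionary: $G$ is infinitesimal iff $F$ acts nilpotently on $M(G)$; $G$ is unipotent iff $V$ does; the order of $G$ is $p$ to the $W(k)$-length of $M(G)$; and $\dim_k \Lie(G) = \dim_k M(G)/FM(G)$. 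The theorem reduces to classifying left $D_k$-modules $M$ of $W(k)$-length $n$ with $F, V$ nilpotent and $M/FM$ one-dimensional.

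First, I would show that any such $M$ is a cyclic $D_k$-module. If $\bar e$ generates $M/FM\cong k$ and $Ve \equiv c\bar e \pmod{FM}$ with $c\in k$, then iterating gives $V^n e \equiv c\cdot \sigma^{-1}(c)\cdots\sigma^{-(n-1)}(c)\,\bar e \pmod{FM}$; since $V^n M = 0$, this product must vanish in $k$, forcing $c=0$. Hence $VM\subseteq FM$, and Nakayama produces a single generator $e$, so $M\cong D_k/I$ for a left ideal $I$ containing $F^n$ and $V^n$.

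The central step is to pin down $I$. From $\dim_k M/FM = 1$ and the length being $n$ one obtains a complete filtration $M\supsetneq FM\supsetneq\cdots\supsetneq F^nM = 0$ by one-dimensional quotients. Define $i := \min\{j\geq 1 : Ve\in F^jM\}$ and write $Ve = F^i(\lambda e + Fw)$ with $\lambda\in W(k)^\times$; the semilinearity yields $Ve\equiv \sigma^i(\lambda)F^ie\pmod{F^{i+1}M}$. A rescaling $e\leadsto \mu e$ with $\mu\in W(k)^\times$ solving a Lang-type equation involving $\sigma^{i+1}$ and $\sigma^i(\lambda)$ (solvable because $k$ is algebraically closed) normalizes the leading coefficient to $1$. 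A further iterative correction $e\leadsto (1 + \sum_{k\geq 1}\beta_kF^k)e$, with each $\beta_k\in W(k)$ determined by an Artin--Schreier-type equation $\sigma^{i+1}(\gamma_k)-\gamma_k = w_{k-1}$ (again solvable), successively kills the higher-order correction terms; the process terminates because $F^n=0$. Hence $I = (F^n,\,V-F^i)$ for some $i\in\{1,\dots,n\}$.

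Finally, I would identify $D_k/(F^n,V-F^i)$ with the Dieudonn\'e module of $W_n^n[V-F^i]$: one has $M(W_n^n) = M(W_n)/F^nM(W_n)$, and since covariant Dieudonn\'e preserves kernels the module of $W_n^n[V-F^i]$ is the kernel of $V-F^i$ on this quotient, giving exactly $D_k/(F^n,V-F^i)$. Non-isomorphism for distinct $i$ then follows from the isomorphism invariant $i = \min\{j\geq 1 : VM\subseteq F^jM\}$. The main obstacle I expect is the iterative normalization: one must verify compatibility of the successive changes of generator and termination of the process, which hinges delicately on the interplay between the semilinearity of $D_k$ and the nilpotence bound $F^n=0$.
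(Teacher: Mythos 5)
Your proposal is correct in substance but follows a genuinely different route from the paper. The paper argues by induction on $n$: it shows that $\ker(F_G^{n-1})$ has Dieudonn\'e module $M(G)/M(G)\F^{n-1}$, deduces from the inductive hypothesis that $\V-\F^i=a\F^{n-1}$ holds in $M(G)$, and then only ever has to remove a \emph{single} top-order correction term, which is the content of its Proposition \ref{Emodulesisom}; cyclicity of $M(G)$ is imported from the geometric fact that $G$ embeds in one copy of $W_n^n$ (a result quoted from earlier work). You instead classify the Dieudonn\'e module directly: you prove cyclicity internally (the semilinear action of $V$ on the line $M/FM$ must be zero by nilpotence, so $VM\subseteq FM$ and Nakayama applies), extract the invariant $i$ from the filtration by powers of $F$, and normalize the full relation $Ve=\sum_j c_jF^je$ by an iterated change of generator. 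The computational engine is the same in both proofs --- Lang/Artin--Schreier-type equations $\sigma^{-1}(c)-\sigma^{i}(c)=a$ solved over the algebraically closed field, with $\F^n=0$ reducing everything to computations mod $p$ --- but your version trades the paper's induction for a terminating iteration, and in exchange is more self-contained (no appeal to the embedding lemma) and makes the distinguishing invariant of the $n$ classes explicit, which the paper leaves implicit.

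Three small repairs are needed. First, $i:=\min\{j\geq1: Ve\in F^jM\}$ is always $1$ once you know $VM\subseteq FM$; you mean the \emph{largest} such $j$ (equivalently $Ve\in F^iM\setminus F^{i+1}M$), with the case $Ve=0$ split off as $i=n$, i.e.\ $\alpha_{p^n}$; the same correction applies to your final invariant $\max\{j: VM\subseteq F^jM\}$. Second, your dictionary (infinitesimal iff $F$ nilpotent, $\dim_k\Lie(G)=\dim_kM/FM$) is that of the \emph{contravariant} Demazure--Gabriel functor, which is an anti-equivalence and therefore sends the kernel $W_n^n[V-F^i]=\ker(V-F^i)$ to a \emph{cokernel}; the resulting module is still $E/\bigl(E(\V-\F^i)+E\F^n\bigr)$ after checking that $E\V^n$ is redundant in the ideal, so the conclusion is unaffected, but the phrase ``covariant preserves kernels'' should be replaced by the cokernel computation (this is exactly what the paper's Lemma \ref{duality} does). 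Third, to conclude $M\simeq D_k/(F^n,V-F^i)$ from the surjection you should note that the source also has length at most $n$, being spanned over $W(k)$ by $1,F,\dots,F^{n-1}$.
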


This result was known for infinitesimal commutative unipotent group schemes of order $\leq p^3$ thanks to \cite[(15.5)]{Oort} and \cite[Theorem 1.1]{NWW}.

Of these group schemes, the only ones that are contained in a smooth connected one-dimensional algebraic group are $\alpha_{p^n}$ and $W_n^n[F-V]$ (the former is a subgroup of $\G_a$ and the latter is the $n$-th Frobenius kernel of a supersingular elliptic curve - see Proposition \ref{ellcurves}) for any $n\geq1$, see Proposition \ref{guciinellcur}. All others are examples of infinitesimal group schemes that act rationally and generically freely on any curve (by \cite[Theorem 1.4]{gouthier2023infinitesimal}), but are not subgroup schemes of a smooth connected one-dimensional algebraic group. We answer in this way to a question of Brion (see \cite[\textsection1]{brion2022actions}). Notice moreover that $W_n^n[F-V]$ is the only self-dual group scheme of the list. If one considers infinitesimal commutative unipotent group schemes with higher dimensional Lie algebra, this is not the case anymore: indeed the $p$-torsion of any principally polarized abelian variety of dimension $g$ and $p$-rank zero, is a self-dual infinitesimal commutative unipotent group scheme, and there exist $p^{g-1}$ different isomorphism classes of such group schemes (see \cite{pries2006short}). 

\vspace{1em}
\noindent\textbf{Outline of the paper.} We begin with some recalls on finite group schemes as well as on Witt vectors and Dieudonn\'e modules. After some preliminary results we proceed with the proof of Theorem \ref{classification}. In the last part we obtain, as consequences of Theorem \ref{classification}, an explicit description of all infinitesimal subgroup schemes of any supersingular elliptic curve (included their $p^n$-torsions) over an algebraically closed field (Proposition \ref{ellcurves}) and the answer to a question of Brion on rational actions of infinitesimal commutative unipotent group schemes on curves (Proposition \ref{guciinellcur}). We end with an example of a non-commutative infinitesimal unipotent group scheme with one dimensional Lie algebra, answering to similar questions raised by both Fakhruddin and Brion (see \cite[Remark 2.10]{Fakhruddin} and \cite[\textsection1]{brion2022actions}).

\section{Preliminaries}

Throughout the work, $k$ will denote a perfect field of characteristic $p>0$. Let us recall here the definitions of the principal objects of this paper, together with some results concerning them. The main background references for this are \cite{DG}, \cite{Milne} and \cite{Pink}.

\subsection{Finite group schemes}

For $G$ a $k$-group scheme, $G^{(p)}$ denotes the Frobenius twist of $G$, that is the base change $G\times_{k,f}\spec(k)$ with respect to the Frobenius morphism $f\colon k\rightarrow k,c\mapsto c^p$ of $k$. Iteratively, one defines $G^{(p^n)}$. Recall that the \textit{height} of an infinitesimal group scheme $G$ is the smallest integer $n>0$ such that the $n$-th Frobenius morphism $F_G^n:G\rightarrow G^{(p^n)}$ vanishes. Moreover, we denote by $k[G]$ its Hopf algebra. 

\begin{defi}[Lie algebra]
    Let $G$ be an affine $k$-group scheme and denote by $I_G=\ker(\varepsilon)$ its augmentation ideal (where $\varepsilon$ is the counit map $\varepsilon\colon k[G]\rightarrow k$). We define the \textit{Lie algebra} of $G$ to be $\Lie (G)=\Hom_k(I_G/I_G^2,k).$ As a $k$-vector space $\Lie (G)$ is the tangent space of $G$ at the identity element $e_G$ and it has an additional structure of Lie algebra (see, for example, \cite[II.\textsection4, 4]{DG}).
\end{defi}

\begin{defi}[Infinitesimal group scheme]
    A $k$-group scheme $G=\spec(A)$ is said to be \textit{infinitesimal} if its augmentation ideal $I_G=\ker(\varepsilon\colon A\rightarrow k)$ is nilpotent.
\end{defi}

Non-trivial infinitesimal group schemes exist only over fields of positive characteristic: indeed, by Cartier's Theorem, in characteristic zero all algebraic groups are smooth.
Moreover, infinitesimal $k$-group schemes are group schemes that topologically are just a point: indeed $I_G$ is nilpotent if and only if the topological spaces $|\spec A|$ and $e_G=|\spec A/I_G|$ are isomorphic (and we always have $A/I_G\simeq k$).
The structure of the underlying scheme of an infinitesimal group scheme over a perfect field is well-known: we recall it in the following Theorem.

\begin{thm}\label{infgrpstrctr}
    Let $k$ be a perfect field of characteristic $p>0$ and $G$ be an infinitesimal $k$-group scheme. Then $$k[G]\simeq k[T_1,\dots,T_s]/(T_1^{p^{e_1}},\dots,T_s^{p^{e_s}})$$ for some integers $e_1,\dots,e_s\geq1.$ In particular $s=\dim_k(\Lie(G)).$
\end{thm}

\begin{proof}
    See \cite[Theorem 11.29]{Milne}.
\end{proof}

\begin{defi}[Unipotent group scheme]
    A $k$-algebraic group $G$ is said to be \textit{unipotent} if it is isomorphic to an algebraic subgroup of the $k$-algebraic group of upper triangular unipotent matrices $U_n$ for some $n\geq1$.
\end{defi}

Recall that a \textit{finite $k$-group scheme} is a $k$-group scheme that is finite as a $k$-scheme and that the category of finite commutative group schemes over a field $k$ is abelian. We call \textit{order} $o(G)$ of a finite $k$-group scheme $G=\spec(A)$ the dimension of $A$ as a $k$-vector space. 

\begin{lemma}
    Let $G=\spec(A)$ be a finite (commutative) $k$-group scheme. Then the dual of $A$ as a $k$-vector space $$A^\vee=\Hom_k(A,k)$$ is a finite dimensional (commutative) $k$-Hopf algebra.
\end{lemma}

\begin{proof}
    See \cite[V.\textsection1.2.10]{DG}. We just recall here that the multiplication and comultiplication of $A^\vee$ are respectively given by the convolution product $$m^\vee\colon A^\vee\otimes_kA^\vee\rightarrow A^\vee, \quad
        f\otimes g\mapsto (A\stackrel{\Delta}{\rightarrow}A\otimes_kA\stackrel{f\otimes g}{\rightarrow}k\otimes_kk\simeq k)$$ and $$
        \Delta^\vee\colon A^\vee\rightarrow A^\vee\otimes_kA^\vee,\quad\varphi\mapsto(A\otimes_kA\stackrel{m}{\rightarrow}A\stackrel{\varphi}{\rightarrow}k).
    $$
\end{proof}

\begin{defi}[Cartier dual] Let $G$ be a finite commutative $k$-group scheme.
    We call \textit{Cartier dual} of $G$ the finite commutative $k$-group scheme $$G^\vee=\spec(A^\vee).$$
\end{defi}

\begin{rmk}\label{rmk: Frob and Ver duality}
The Cartier dual gives an exact contravariant functor from the category of finite commutative $k$-group schemes to itself.
    When $G$ is a finite commutative $k$-group scheme one can verify that the Verschiebung morphism $V_G\colon G^{(p)}\rightarrow G$ coincides with the dual of the Frobenius morphism $F_{G^\vee}\colon{G^\vee} \rightarrow(G^\vee)^{(p)}\simeq(G^{(p)})^\vee $ of the Cartier dual $G^\vee$. Moreover it holds $V_G\circ F_G=p_G$ and $F_G\circ V_G=p_{G^{(p)}}$ (see \cite[IV.\textsection3, 4.9-10]{DG} for these two facts).
\end{rmk}

\begin{prop}\label{prop: characterization finite grps}
    Let $G$ be a finite commutative $k$-group scheme.
    \begin{multicols}{2}
\begin{enumerate}
    \item The following are equivalent:
    \begin{enumerate}
        \item $G$ is étale,
        \item $F_G$ is an isomorphism,
        \item $V_{G^\vee}$ is an isomorphism,
        \item $G^\vee$ is of multiplicative type.
    \end{enumerate}

\columnbreak
    \item The following are equivalent:
    \begin{enumerate}
        \item $G$ is infinitesimal,
        \item $F_G$ is nilpotent,
        \item $V_{G^\vee}$ is nilpotent,
        \item $G^\vee$ is unipotent.
    \end{enumerate}
\end{enumerate}
\end{multicols}
\end{prop}

\begin{proof}
    See \cite[IV.\textsection3, 5.3]{DG}.
\end{proof}

The following is a useful lemma on infinitesimal group schemes. 

\begin{lemma}\label{infgrps}Let $G$ be an infinitesimal $k$-group scheme of order $p^n$ for some $n\geq 0$. Then: \begin{enumerate}[label=\roman*)]     \item $\max\left(\dim_k(\Lie(G)),\mathrm{ht}(G)\right)\leq n$;   \item $n\leq\dim_k(\Lie(G))\times\mathrm{ht}(G)\leq n\times\min\left(\dim_k(\Lie(G)),\mathrm{ht}(G)\right)$; 
\item if $G$ is also commutative and unipotent then also $V_G^n=0$, where $V_G$ is the Verschiebung morphism. \end{enumerate}\end{lemma}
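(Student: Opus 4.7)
The plan is to exploit the Frobenius filtration $0\subset G[F]\subset G[F^2]\subset\cdots\subset G[F^h]=G$ (writing $h:=\mathrm{ht}(G)$) together with the structure theorem for height-one infinitesimal group schemes; part (iii) will then follow from (i) via Cartier duality. Throughout set $d:=\dim_k\Lie(G)$.

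For (i), the bound $h\le n$ comes from the observation that each inclusion $G[F^i]\subsetneq G[F^{i+1}]$ must be strict for $0\le i<h$ (otherwise the chain would already stabilize and $F_G^i$ would vanish, contradicting the minimality of $h$), so each successive quotient has order divisible by $p$ and $|G|\ge p^h$. For $d\le n$, I would use that $\Lie(G[F])=\Lie(G)$ (since $k[G[F]]=k[G]/\mathfrak m^{[p]}$ and $\mathfrak m^{[p]}\subseteq\mathfrak m^2$), and then invoke the classical equivalence between height-one infinitesimal group schemes and restricted $p$-Lie algebras to obtain $k[G[F]]\cong k[x_1,\ldots,x_d]/(x_1^p,\ldots,x_d^p)$, of dimension $p^d$. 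The inclusion $G[F]\subseteq G$ yields $p^d\le p^n$.

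For (ii), the key fact is that $F_G^h=0$ translates dually to $a^{p^h}=0$ for every $a\in\mathfrak m\subset k[G]$, so that lifting a basis of $\mathfrak m/\mathfrak m^2$ (via Nakayama, valid since $\mathfrak m$ is nilpotent) yields a surjection $k[x_1,\ldots,x_d]/(x_1^{p^h},\ldots,x_d^{p^h})\twoheadrightarrow k[G]$. Comparing dimensions ($p^{dh}$ versus $p^n$) gives $n\le dh$. The upper bound $dh\le n\cdot\min(d,h)$ is then immediate from (i): dividing by $\min(d,h)$ reduces it to either $h\le n$ or $d\le n$, both proved above.

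For (iii), the decomposition of finite commutative group schemes over a perfect field shows that an infinitesimal unipotent commutative group has local-local type, which is preserved by Cartier duality; so $G^*$ is again commutative infinitesimal of the same order $p^n$. Applying (i) to $G^*$ gives $\mathrm{ht}(G^*)\le n$, equivalently $F_{G^*}^n=0$, and since $V_G$ is Cartier dual to $F_{G^*}$, this is the same as $V_G^n=0$. The most delicate step in the whole argument is the algebraic translation in (ii) — identifying the effect of the relative Frobenius on the Hopf algebra and checking that the resulting surjection from the truncated polynomial ring is well-defined — but once this is in place the rest is either standard structure theory or bookkeeping of inequalities.
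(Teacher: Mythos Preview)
Your proof is correct, but it takes a different route from the paper. The paper first reduces to $k$ perfect and then invokes Milne's structure theorem (Theorem~11.29 in \cite{Milne}), which gives directly that $k[G]\simeq k[T_1,\dots,T_s]/(T_1^{p^{e_1}},\dots,T_s^{p^{e_s}})$ with $s=\dim_k\Lie(G)$, $e_s=\mathrm{ht}(G)$, and $\sum e_i=n$; parts (i) and (ii) are then one-line arithmetic with the $e_i$. You instead rebuild the needed consequences of that structure theorem by hand: the strictly increasing Frobenius filtration gives $h\le n$, the order $p^d$ of the first Frobenius kernel gives $d\le n$, and your Nakayama surjection from $k[x_1,\dots,x_d]/(x_i^{p^h})$ onto $k[G]$ gives $n\le dh$. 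This is more self-contained (and in fact your arguments for (i)--(ii) do not really need the reduction to a perfect field), at the cost of being longer. Part (iii) is handled identically in both: pass to the Cartier dual, which is again infinitesimal commutative unipotent of order $p^n$, apply (i) to bound $\mathrm{ht}(G^\vee)\le n$, and dualize $F_{G^\vee}^n=0$ to $V_G^n=0$.
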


\begin{proof}We can suppose that $k$ is perfect. Then by Theorem \ref{infgrpstrctr} we have $$k[G]\simeq k[T_1,\dots,T_s]/(T_1^{p^{e_1}},\dots,T_s^{p^{e_s}})$$ as $k$-algebras where $$1\leq e_1\leq \dots\leq e_s, \quad e_1+\dots+e_s=n,\quad e_s=\mathrm{ht}(G)\quad \mbox{and} \quad s=\dim_k(\Lie(G)).$$ The first two points then follow, indeed \begin{enumerate}[label=\roman*)] \item $n=e_1+\dots+e_s\geq s=\dim_k(\Lie(G))$ and $n=e_1+\dots+e_s\geq e_s=\mathrm{ht}(G)$ yielding that $n\geq \max(\dim_k(\Lie(G)),\mathrm{ht}(G))$; and \item $n=e_1+\dots+e_s\leq s\times e_s=\dim_k(\Lie(G))\times\mathrm{ht}(G)\leq n\times\min\left(\dim_k(\Lie(G)),\mathrm{ht}(G)\right)$ where the second inequality follows from the first point. \item If $G$ is infinitesimal commutative unipotent, then its dual $G^\vee$ is also such (see Proposition \ref{prop: characterization finite grps}) and has order $p^n$. Then, applying the first statement, we have $F_{G^\vee}^n=0$ and thus $V_G^n=0$ (by Remark \ref{rmk: Frob and Ver duality}).\end{enumerate}
 
\end{proof}

\begin{rmk}\label{onenrmk}Notice that, by the second statement of the above Lemma, we have in particular that $\dim_k(\Lie(G))=1$ if and only if $\mathrm{ht}(G)=n$ and  that $\mathrm{ht}(G)=1$ if and only if $\dim_k(\Lie(G))=n$.\end{rmk}

\subsection{Witt vectors and Dieudonn\'e modules}
 
The group scheme of Witt vectors $W$ over a perfect field $k$ of positive characteristic $p$ plays a central role in the study of unipotent commutative $k$-group schemes. We thus recall here some of its main properties that will be used freely later on. A reference for this is \cite[V.\textsection1 and \textsection4]{DG}. As a $k$-scheme, $W$ coincides with $\A_k^{\mathbb{N}}$ and it is endowed with a structure of ring scheme coming from Witt polynomials. We will mostly be interested in its structure as a group scheme.
We denote by $W_n$ the $k$-subgroup scheme of $W$ of Witt vectors of length $\leq n$ and by $W_n^m$ the kernel of $F^m\colon W_n\rightarrow W_n$ (where $F$ is the Frobenius morphism of $W_n$). As a $k$-scheme, $W_n$ coincides with $\A_k^n$ and we denote by $k[T_0,\dots,T_{n-1}]$ its $k$-Hopf algebra. Notice that if we want to consider $r$ copies of $W_n$ we will use the notation $(W_n)^r$ with the parenthesis. The $k$-group scheme $W_n^m$ is the Cartier dual of $W_m^n$ for every $n,m\geq1.$ 
If for example we consider $(x_0,x_1)$ and $(y_0,y_1)$ two Witt vectors of length $2$, then their sum is given by $$(x_0,x_1)+(y_0,y_1)=\left(x_0+y_0,x_1+y_1+S_1(x_0,y_0)\right)$$ where $S_1(x_0,y_0)=\frac{x_0^p+y_0^p-(x_0+y_0)^p}{p}=-\sum_{k=1}^{p-1}\frac{1}{p}\binom{p}{k}x_0^ky_0^{p-k}$.
In general, the expression of the sum of Witt vectors becomes more complex as the length increases. One important property is that, if we give weight $p^j$ to the $j$-th coordinate for any $j\in\mathbb{N}$, then the polynomial expressing the $i$-th term of the sum is homogeneous of degree $p^i$; moreover it involves just the coordinates of the vectors up to the index $i$.
Notice that $W$ is defined over $\F_p$ and thus the Frobenius twist $W^{(p)}$ is isomorphic to $W$.
Let $V$ be the Verschiebung morphism of $W_n$. On points $F$ and $V$ act as follows: $$F\left((x_0,x_1,\dots,x_{n-1})\right)=(x_0^p,x_1^p,\dots,x_{n-1}^p) $$ and $$ V\left((x_0,x_1,\dots,x_{n-1})\right)=(0,x_0,x_1,\dots,x_{n-2}).$$
Moreover $F\circ V=V\circ F=p\cdot\id$, therefore $$p\cdot (x_0,x_1,\dots,x_{n-1})=(0,x_0^p,x_1^p,\dots,x_{n-2}^p).$$ 

Let us recall the following important fact, that allows us, when studying infinitesimal commutative unipotent group schemes over a perfect field, to work with Witt vectors.

\begin{prop}\label{infwitt}
If $k$ is perfect, then every infinitesimal commutative unipotent $k$-group scheme $G$ can be embedded in $\left(W_n^m\right)^r$ for some $n,m,r\geq1$.
\end{prop}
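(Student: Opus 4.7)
The plan is to combine the nilpotency bounds from Lemma \ref{infgrps} with the classical fact, due to Dieudonn\'e theory, that any finite commutative unipotent $k$-group scheme embeds into a product of truncated Witt vector groups over a perfect base field. Concretely, if $p^N$ is the order of $G$, then parts (i) and (iii) of Lemma \ref{infgrps} give $F_G^N = 0$ and $V_G^N = 0$, so it suffices to take $n = m = N$ and produce an embedding of $G$ into $(W_N^N)^r$ for some $r \geq 1$.

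The main step is to produce a closed immersion $\iota \colon G \hookrightarrow (W_n)^r$ with $n = N$ and some $r \geq 1$, where $(W_n)^r$ denotes the $r$-fold self-product. I would invoke this as a classical output of the Dieudonn\'e correspondence over the perfect field $k$ (compare \cite[V.\textsection1]{DG}): the finite-length module $M(G)$ over the Dieudonn\'e ring $D_k = W(k)\{F,V\}/(FV-p)$ satisfies $V^n M(G) = 0$, and in the subcategory of such $V^n$-torsion $D_k$-modules the module corresponding to $W_n$ is a projective generator of rank one. Presenting $M(G)$ as a quotient of $r$ copies of this generator and translating through the equivalence gives the embedding $\iota$.

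For the second step, by functoriality of the Frobenius morphism, $F^m \circ \iota = \iota \circ F_G^m = 0$ for $m = N$; hence $\iota$ factors through $\ker(F^m \colon (W_n)^r \to (W_n)^r) = (W_n^m)^r$, yielding the embedding claimed by the proposition. The hard part is clearly the first step, whose rigorous execution rests on the full machinery of the Dieudonn\'e correspondence; granted that, the Frobenius factorization in the second step is purely formal.
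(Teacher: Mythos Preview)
The paper's own proof is a bare citation to \cite[V.\textsection1, Proposition 2.5]{DG}; it does not supply an argument. Your proposal correctly unpacks the content of that reference: bound the nilpotency indices of $F_G$ and $V_G$ by $N$ via Lemma \ref{infgrps}, present the Dieudonn\'e module of $G$ as a quotient of finitely many copies of the appropriate cyclic module, and translate back through the anti-equivalence to obtain the embedding. Your two-step organization---first embed into $(W_n)^r$ using $V_G^n=0$, then factor through the Frobenius kernel using $F_G^m=0$---is exactly how Demazure--Gabriel structure the proof.

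One small point of phrasing: you describe the module corresponding to $W_n$ as a projective generator among $V^n$-torsion modules, which is the covariant picture. The paper's functor $M$ is contravariant, so the matching statement in its conventions is that $M(W_n^m)=E/(E\F^m+E\V^n)$ surjects onto any finite-length $E$-module annihilated by both $\F^m$ and $\V^n$ (equivalently, $W_n^m$ is an injective cogenerator in the category of infinitesimal commutative unipotent groups with $F^m=V^n=0$). This is purely a convention issue and does not affect the correctness of your outline.
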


\begin{proof}
   See for example \cite[V.\textsection1, Proposition 2.5]{DG} or \cite[Proposition 22.5]{Pink}.
\end{proof}

\begin{rmk}\label{FVnilp}
A finite commutative $k$-group scheme is infinitesimal unipotent if and only if its Frobenius and Verschiebung morphisms are both nilpotent (see Proposition \ref{prop: characterization finite grps}). In particular, in Proposition \ref{infwitt} one can take $m$ and $n$ to be respectively their nilpotency indices (this is a direct consequence of the functoriality of the Frobenius and Verschiebung morphism). 
\end{rmk}

The collection of $W_n^m$ becomes a direct system with index set $\mathbb{N}\times\mathbb{N}$ via the homomorphisms
\begin{center}
    \begin{tikzcd}
        W_n^m\arrow[d,hook,"v"]\arrow[r,hook,"i"]&W_n^{m+1}\arrow[d,hook,"v"]\\
        W_{n+1}^m\arrow[r,hook,"i"]&W_{n+1}^{m+1}
    \end{tikzcd}
\end{center} where $v$ and $i$ are the monomorphisms induced naturally by the Verschiebung and the inclusion respectively. Let $\sigma\colon W(k)\rightarrow W(k)$ be the ring endomorphism induced by the Frobenius morphism $F$.

\begin{defi}
    We will denote by $E$ the ring of  non-commutative polynomials over the ring $W(k)$ in two variables $\F$ and $\V$ subject to the following relations:
    \begin{itemize}
        \item $\F\cdot\xi=\sigma(\xi)\cdot \F$ for all $\xi\in W(k)$;
        \item $\V\cdot\sigma(\xi)=\xi\cdot \V$ for all $\xi\in W(k)$;
        \item $\F \V=\V\F=p$.
    \end{itemize}
\end{defi}

Notice that $E$ is a free left (or right) $W(k)$-module with basis $\{\dots, \V^2, \V, 1, \F, \F^2,\dots\}$ (see \cite[V.\textsection1,3]{DG}).

\begin{prop}
    There exists a unique ring homomorphism $E\rightarrow\End(W_n^m)$ for all $n,m$ such that $\F$ and $\V$ act as the Frobenius and the Verschiebung morphisms and $\xi\in W(k)$ acts through multiplication by $\sigma^{-n}(\xi).$ These actions of $E$ are compatible with the homomorphisms $i$ and $v$ of the directed system.
\end{prop}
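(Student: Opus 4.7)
The plan is to define the ring homomorphism on the generators $W(k)\cup\{\F,\V\}$ of $E$, verify the three defining relations, and then check compatibility with the maps $i$ and $v$ of the directed system. I would send $\F$ and $\V$ to the Frobenius and Verschiebung endomorphisms of $W_n^m$ (both preserve $W_n^m=\ker(F^m)$: $F$ tautologically, and $V$ because $FV=VF=p$), and I would send $\xi\in W(k)$ to multiplication by $\sigma^{-n}(\xi)$ in the Witt ring scheme. This preserves $W_n^m$ because $F^m(\eta x)=\sigma^m(\eta)F^m(x)=0$ whenever $F^m(x)=0$, for any $\eta\in W(k)$.

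Next I would check the three relations of $E$. The first, $\F\xi=\sigma(\xi)\F$, unwinds to $F(\sigma^{-n}(\xi)x)=\sigma^{1-n}(\xi)F(x)$, which is precisely the $\sigma$-semilinearity of the Frobenius on Witt vectors. The second, $\V\sigma(\xi)=\xi\V$, unwinds to $V(\sigma^{1-n}(\xi)x)=\sigma^{-n}(\xi)V(x)$, a consequence of the Witt-vector identity $V(\eta y)=\sigma^{-1}(\eta)V(y)$ (equivalently $\eta V(y)=V(\sigma(\eta)y)$, itself obtained from $VF=p$ applied to $\sigma(\eta)$). The third relation $\F\V=\V\F=p$ already holds on $W$, hence on $W_n^m$. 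Uniqueness is then automatic, since $E$ is generated as a ring by $W(k)\cup\{\F,\V\}$, so prescribing the images of the generators determines the homomorphism.

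Finally I would verify compatibility with the directed system. For $i\colon W_n^m\hookrightarrow W_n^{m+1}$ the index $n$ is unchanged and $F,V$ restrict, so there is nothing to check. For $v\colon W_n^m\hookrightarrow W_{n+1}^m$, induced by $V\colon W_n\to W_{n+1}$, the nontrivial point is compatibility of the scalar action: one needs $V(\sigma^{-n}(\xi)x)=\sigma^{-(n+1)}(\xi)V(x)$, which is once again $V(\eta y)=\sigma^{-1}(\eta)V(y)$ applied to $\eta=\sigma^{-n}(\xi)$; compatibility with $\F$ and $\V$ themselves is immediate from $FV=VF$. The one conceptually important choice in the whole argument is the twist $\sigma^{-n}$ in the scalar action: it is precisely what makes $v$-compatibility hold, and once it is fixed by this requirement all the Dieudonné relations reduce to the standard semilinearities of $F$ and $V$ on $W$.
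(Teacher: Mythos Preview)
Your argument is correct and is the standard verification: define the map on generators, check the three relations via the semilinearity identities $F(\eta x)=\sigma(\eta)F(x)$ and $\eta V(x)=V(\sigma(\eta)x)$ on Witt vectors, and observe that the $\sigma^{-n}$ twist is forced by compatibility with $v$. The paper itself gives no argument beyond citing Demazure--Gabriel, so your proposal simply fills in the details that the paper defers to the reference.
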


\begin{proof}
    See \cite[V.\textsection1, 3.4]{DG}.
\end{proof}

\begin{defi}[Dieudonn\'e module]
For any infinitesimal commutative unipotent $k$-group scheme, we define $$M(G):=\varinjlim_{n,m}\Hom(G,W_n^m)$$ with left $E$-module structure given by the action of $E$ on $W_n^m$, called \textit{Dieudonn\'e module of $G$}. 
    Then $M$ is a left exact additive functor from the category of infinitesimal commutative unipotent $k$-group schemes to that of left $E$-modules (see for example \cite[Lecture 10]{Pink}).
\end{defi}

For $G$ an infinitesimal commutative unipotent $k$-group scheme, $\mathrm{length}_{W(k)}\left(M(G)\right)=\mathrm{log}_p\left(o(G)\right)$ and the functor $M$ is actually an exact functor; see, for example, \cite[Proposition 23.3 and Lemma 23.5]{Pink}.

\begin{thm}
    The functor $M$ induces an anti-equivalence of categories between the category of infinitesimal commutative unipotent $k$-group schemes to that of left $E$-modules of finite length with $F$ and $V$ nilpotent. 
\end{thm}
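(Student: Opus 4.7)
The plan is to follow the standard strategy for Dieudonn\'e-type anti-equivalences: first compute $M$ explicitly on the generating objects $W_n^m$, then bootstrap to full faithfulness on all infinitesimal commutative unipotent $k$-group schemes via the embedding of Proposition \ref{infwitt}, and finally establish essential surjectivity by presenting arbitrary finite-length $E$-modules (with $\F,\V$ nilpotent) as cokernels.

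For the first step, the canonical inclusions $W_n^m\hookrightarrow W_{n'}^{m'}$ (for $n'\geq n$, $m'\geq m$) furnish a distinguished element $e_{n,m}\in M(W_n^m)$, and I would show that $M(W_n^m)\cong E/(E\F^m+E\V^n)$ as a left $E$-module, with $e_{n,m}$ corresponding to the class of $1$. The relations $\F^m\cdot e_{n,m}=0$ and $\V^n\cdot e_{n,m}=0$ are immediate: the first because $F^m$ vanishes on $W_n^m$ by definition, the second via Cartier duality applied to $W_m^n$, on which $F^n=0$. The content lies in the reverse direction, which amounts to computing $\Hom(W_n^m,W_{n'}^{m'})$ in coordinates and checking that these $\Hom$ groups stabilize under the transition maps of the direct system to the asserted quotient. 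Additivity of $M$ then yields $M((W_n^m)^r)\cong M(W_n^m)^r$, and together with the explicit form of $M(W_n^m)$ this establishes full faithfulness on finite products of the $W_n^m$. For a general $G$, Proposition \ref{infwitt} provides an embedding $G\hookrightarrow (W_n^m)^r$, realizing $G$ as the kernel of some morphism between two such products; full faithfulness at $G$ then follows from full faithfulness on products together with left exactness of $M$ and a diagram chase.

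Essential surjectivity is largely formal. Let $N$ be a finite-length left $E$-module with $\F$ and $\V$ nilpotent, and choose $n,m$ so that $\F^m N=\V^n N=0$; then $N$ is a finitely generated module over $E/(E\F^m+E\V^n)\cong M(W_n^m)$, so there is a surjection $M((W_n^m)^r)\twoheadrightarrow N$ for some $r$. Applying the same reasoning to the kernel exhibits $N$ as the cokernel of a map $M((W_{n'}^{m'})^{r'})\to M((W_n^m)^r)$, which by fullness lifts to a morphism of group schemes $\phi\colon (W_n^m)^r\to (W_{n'}^{m'})^{r'}$; left exactness of $M$ then yields $M(\ker\phi)\cong N$. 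The main obstacle I anticipate is the first step: pinning down $M(W_n^m)$ requires careful bookkeeping of the $W(k)$-action twisted by $\sigma^{-n}$, the behaviour of the transition maps $i$ and $v$ in the colimit, and the interaction between the Frobenius relation on $W_n^m$ and the Verschiebung relation inherited via Cartier duality from $W_m^n$; once this is in place, the remaining steps reduce to a diagram chase.
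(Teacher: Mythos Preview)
The paper does not actually prove this theorem: its entire proof is a citation to \cite[V.\textsection1, Theorem 4.3]{DG} and \cite[Theorem 23.2]{Pink}. Your sketch is precisely the classical argument one finds in those references (identify $M(W_n^m)$ with $E/(E\F^m+E\V^n)$, reduce full faithfulness to the case of products of $W_n^m$'s via Proposition~\ref{infwitt}, then obtain essential surjectivity by presenting a given module as a cokernel), so in substance you are reproducing what the cited sources do rather than diverging from the paper.

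One small point worth tightening: in the essential surjectivity step you invoke only left exactness of $M$ to conclude $M(\ker\phi)\cong N$, but a contravariant left exact functor takes \emph{cokernels} to kernels, not kernels to cokernels. To pass from the left exact sequence $0\to\ker\phi\to(W_n^m)^r\to(W_{n'}^{m'})^{r'}$ to a right exact sequence on Dieudonn\'e modules you need $M$ to be exact, not merely left exact. In the standard treatments this exactness is either proved directly (using that the $W_n^m$ are injective cogenerators in the category) or deduced a posteriori once the equivalence is established on the generating objects; either way it deserves a sentence, since the paper itself later relies on the \emph{exact} contravariant Dieudonn\'e functor (e.g.\ in the proof of Lemma~\ref{induction}).
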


\begin{proof}
See \cite[V.\textsection1, Theorem 4.3]{DG} or \cite[Theorem 23.2]{Pink}.
\end{proof}

\section{Around group schemes of the form $W_n^{n'}[F^r-V^s]$}

From now on, for all $n,n',r,s\geq1$ integers, $W_n^{n'}[F^r-V^s]$ will denote the kernel of the morphism $F^r-V^s\colon W_n^{n'}\rightarrow W_n^{n'}$.
In the following Lemma we compute the Hopf algebra of certain infinitesimal commutative unipotent group schemes of this form. The results of this section are not directly related to the proof of the main Theorem of this work. Nevertheless, they could play a key role for the study of infinitesimal commutative unipotent group schemes with Lie algebra of dimension greater than one and contained in just one copy of Witt vectors. For this reason, together with the fact that they are interesting on their own, it was decided to include them.

\begin{lemma}\label{Hopflagbras} 
For every integer $r,s\geq1$ and $m\geq2$ let $d=lcm(r,s)$ and consider the $k$-group scheme
    $G=W_{md}^{md}[F^r-V^s].$ Then
    \begin{enumerate}[label=(\roman*)]
        \item $G=W_{n}^{n'}[F^r-V^s]$ where $n=\min\left(sm\frac{d}{r},md\right)$ and $n'=\min\left(rm\frac{d}{s},md\right)$, and
        \item the $k$-Hopf algebra of $G$ is $$k[G]=k[T_0,\dots,T_{n-1}]/(T_0^{p^r},\dots,T_{s-1}^{p^r},T_s^{p^r}-T_0,\dots,T_{n-1}^{p^r}-T_{n-s-1})$$ where $k[T_0,\dots,T_{n-1}]$ is the $k$-Hopf algebra $k[W_n]$ of Witt vectors of length $\leq n$, that is the comultiplication on $k[G]$ is given by that of Witt vectors.
    \end{enumerate}
\end{lemma}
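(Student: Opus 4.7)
My plan is to compute $k[G]$ directly from the defining equations of the kernel of $F^r - V^s$ on $W_{md}^{md}$ and then read off the presentation (ii); assertion (i) will follow from the same computation.

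Since the kernel of a group-scheme homomorphism coincides with the equalizer of the two morphisms, the ideal defining $G$ inside $k[W_{md}] = k[T_0, \ldots, T_{md-1}]$ is generated by $(F^r)^*(T_i) - (V^s)^*(T_i)$ for $i = 0, \ldots, md - 1$. Using $F^*(T_i) = T_i^p$ and $V^*(T_0) = 0$, $V^*(T_i) = T_{i-1}$ for $i \geq 1$, these generators are $T_i^{p^r}$ for $i < s$ and $T_i^{p^r} - T_{i-s}$ for $s \leq i \leq md - 1$. Adjoining the ambient relations $T_i^{p^{md}} = 0$ yields a complete but possibly redundant presentation of $k[G]$.

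Next, I would use the relations $T_i^{p^r} = T_{i-s}$ to rewrite each $T_j$ for $j = \ell s + \epsilon$ (with $0 \leq \epsilon < s$) in terms of the ``top-layer'' variables: iterating gives $T_j \equiv T_{md - s + \epsilon}^{p^{(md/s - \ell - 1)r}}$ modulo the relations of type $T_i^{p^r} - T_{i-s}$. Feeding this into $T_\epsilon^{p^r} = 0$ yields $T_{md - s + \epsilon}^{p^{mdr/s}} = 0$, and feeding into $T_i^{p^{md}} = 0$ yields $T_{md - s + \epsilon}^{p^{md}} = 0$; so each top-layer variable has nilpotency exponent $p^{\min(mdr/s,\, md)} = p^{n'}$.

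Then I case-split. If $s \geq r$, then $n = md$ and $n' = mdr/s \leq md$: no variable is forced to vanish, the ambient relations become redundant, and $k[G]$ is presented exactly as in (ii) with $n$ variables. If $s < r$, then $n = ms^2/g < md$ (with $g = \gcd(r,s)$) and $n' = md$: the rewriting combined with $T_{md - s + \epsilon}^{p^{md}} = 0$ forces $T_j = 0$ in $k[G]$ whenever $(md/s - \ell - 1)r \geq md$, equivalently $\ell + 1 \leq m(r - s)/g$. A short arithmetic check identifies this with the range $j < md - n$; relabeling the $n$ surviving variables $T_{md - n}, \ldots, T_{md - 1}$ as $T_0, \ldots, T_{n-1}$ produces exactly the presentation (ii), and the vanishings exhibit $G$ as a closed subscheme of $W_n^{n'} \subseteq W_{md}^{md}$ (the inclusion being the iterated Verschiebung embedding), giving (i). The main technical obstacle is this last case: one must check scheme-theoretically (not merely on points) that $T_j$ vanishes in $k[G]$ for every $j < md - n$, and verify that the identity $ms(r-s)/g = md - n$ holds on the nose. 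Once this is done, both (i) and (ii) fall out of the presentation.
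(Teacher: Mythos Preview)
Your approach is correct, but it is organized differently from the paper's. The paper separates the two assertions: it proves (i) first by a short group-theoretic argument, and only then descends to the smaller ambient group to compute (ii).

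For (i), the paper simply observes that $s\mid n$ and $r\mid n'$, so on $G$ one has
\[
V_G^{\,n}=(V_G^{\,s})^{n/s}=(F_G^{\,r})^{n/s}
\qquad\text{and}\qquad
F_G^{\,n'}=(F_G^{\,r})^{n'/r}=(V_G^{\,s})^{n'/r},
\]
and since $\max(n,n')=md$ always, one of these is zero directly from $G\subseteq W_{md}^{md}$, and the other then follows. This gives $G\subseteq W_n^{n'}$ without any case-split or Hopf-algebra computation. Your approach recovers (i) only at the end, as a by-product of having computed the full presentation in $k[W_{md}^{md}]$ and, in the case $s<r$, having shown that the bottom $md-n$ coordinates vanish in $k[G]$.

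For (ii), the two approaches are essentially the same computation, just carried out in different ambient spaces. The paper, having already placed $G$ inside $W_n^{n'}$, writes down the equalizer relations $a_i^{p^r}=0$ ($i<s$) and $a_i^{p^r}=a_{i-s}$ ($i\geq s$) together with $a_i^{p^{n'}}=0$, and then shows by a short induction (using $n'\geq (n/s)r$) that the last family is redundant. You do the same thing in $k[W_{md}^{md}]$: your rewriting $T_j=T_{md-s+\epsilon}^{p^{(md/s-\ell-1)r}}$ and the resulting nilpotency bounds are exactly this redundancy check, plus (in the case $s<r$) the extra step of killing the superfluous variables and relabeling.

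What each buys: the paper's route is shorter and avoids the case-split on $\mathrm{sgn}(r-s)$ and the arithmetic identity $ms(r-s)/g=md-n$. Your route is more self-contained at the level of Hopf algebras and makes the embedding $G\hookrightarrow W_n$ via iterated Verschiebung completely explicit. Your worry about checking the vanishing of $T_j$ ``scheme-theoretically'' is not an obstacle: your argument already takes place in the quotient ring $k[G]$, so the vanishing is automatic once the exponent inequality $(md/s-\ell-1)r\geq md$ holds.
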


\begin{proof}
For the first statement, the inclusion $W_{n}^{n'}[F^r-V^s]\subseteq G$ is clear since $n,n'\leq md$. For the other inclusion, observe that $s\mid n$ and $r\mid n'$ and thus $$V_G^n=(V_G^s)^{\frac{n}{s}}=(F_G^r)^{\frac{n}{s}}=0\quad\mbox{and}\quad F_G^{n'}=(F_G^r)^{\frac{n'}{r}}=(V_G^s)^{\frac{n'}{r}}=0.$$
    For the second statement, for any $k$-algebra $R$ we have $$G(R)=$$$$\left\{\underline{a}\in W_n(R)\mid \left(a_0^{p^r},\dots,a_{s-1}^{p^r},a_s^{p^r},\dots,a_{n-1}^{p^r}\right)=\left(0,\dots,0,a_0,\dots,a_{n-s-1}\right)\mbox{ and } a_i^{p^{n'}}=0\;\forall i\right\}$$$$=\left\{\underline{a}\in W_n(R)\mid \left(a_0^{p^r},\dots,a_{s-1}^{p^r},a_s^{p^r}-a_0,\dots,a_{n-1}^{p^r}-a_{n-s-1}\right)=\underline{0}\mbox{ and } a_i^{p^{n'}}=0\;\forall i\right\}$$$$=\left\{\underline{a}\in W_n(R)\mid \left(a_0^{p^r},\dots,a_{s-1}^{p^r},a_s^{p^r}-a_0,\dots,a_{n-1}^{p^r}-a_{n-s-1}\right)=\underline{0}\right\}$$ where $\underline{a}=(a_0,\dots,a_{n-1})$ and the last equality is due to the fact that $n'\geq 2r$: indeed clearly $a_i^{p^{n'}}=0$ for every $i=0,\dots,s-1$ since in this case $a_i^{p^{r}}=0$, and for $i=s,\dots,n-1$ we have $a_i^{p^{n'}}=a_{i-s}^{p^{n'-r}}=0.$
\end{proof}

\begin{example}
Notice that in general the Hopf algebra of a group scheme of the form $W_n^n[F^r-V^s]$ for any $n\geq1$ and $r,s=1,\dots,n$ does not have the "regular" shape appearing for the cases treated in Lemma \ref{Hopflagbras}.
Take for example $$G=W_3^3[V-F^2]=W_2^3[V-F^2].$$ Notice that $G=\ker(F_H^3)$ where $H=W_4^4[V-F^2]=W_2^4[V-F^2]=\spec(A)$ and $$A=k[T_0,T_1]/(T_0^{p^2},T_1^{p^2}-T_0)$$ by Lemma \ref{Hopflagbras}. Therefore, $$k[G]=A/(T_0^{p^3},T_1^{p^3})=k[T_0,T_1]/(T_0^{p^2},T_1^{p^2}-T_0,T_0^{p^3},T_1^{p^3})=k[T_0,T_1]/(T_0^{p},T_1^{p^2}-T_0).$$ \end{example}

As recalled in Proposition \ref{infwitt}, if the base field $k$ is perfect, every infinitesimal commutative unipotent $k$-group scheme $G$ can be embedded in $\left(W_n^m\right)^r$ for some $n,m,r\geq1$. When either $G$ or its Cartier dual has one-dimensional Lie algebra, then they both embed in just one copy of the group scheme of Witt vectors (see \cite[Lemma 2.28]{gouthier2023infinitesimal}), but this is not the only case. Below we see that there are many examples of infinitesimal commutative unipotent group schemes $G$ that embed in just one copy of the group scheme of Witt vectors and such that neither $G$ nor $G^\vee$ have one-dimensional Lie algebra.

\begin{prop}\label{duality}
 For every integer $r,s\geq1$ and $m\geq2$ let $d=lcm(r,s)$, $n=\min\left(sm\frac{d}{r},md\right)$ and $n'=\min\left(rm\frac{d}{s},md\right)$. The Dieudonn\'e module of the $k$-group scheme
    $$G=W_{md}^{md}[F^r-V^s]=W_{n}^{n'}[F^r-V^s]$$ is $$M(G)=E/(E(\F^r-\V^s)+E\F^{n'}).$$ Moreover, the Cartier dual of $G$ is $$G^\vee=W_{n'}^n[V^r-F^s].$$ 
\end{prop}

\begin{proof}
    Recall (see, for example, \cite[Proposition 23.3]{Pink}) that $E_n^{n'}:= M(W_n^{n'})=E/(E\F^{n'}+E\V^n).$ The short exact sequence $$0\rightarrow G\rightarrow
     W_n^{n'}\rightarrow \Ima(F^r-V^s)\rightarrow0$$ yields the short exact sequence of $E$-modules $$0\rightarrow E_n^{n'}(\F^r-\V^s)\rightarrow E_n^{n'}\rightarrow M(G)\rightarrow0$$ and thus $M(G)=E/(E(\F^r-\V^s)+E\F^{n'})$ as stated.
     
     Consider now the Cartier dual $G^\vee$ of $G$. First of all, let us show that $G^\vee$ embeds in just one copy of the $k$-group scheme of Witt vectors. Indeed, if this was not the case, by \cite[Lemma 2.28]{gouthier2023infinitesimal}, $G^\vee$ would contain $\alpha_p\times_k\alpha_p$ as a $k$-subgroup scheme. As a consequence, we would have a surjection $$G\twoheadrightarrow\alpha_p\times_k\alpha_p,$$ implying that $k[\alpha_p\times_k\alpha_p]=k[U_1,U_2]/(U_1^p,U_2^p)$ is a $k$-Hopf subalgebra of $k[G]$. This is not the case since, up to scalar multiplication, there exists a unique element $x\in k[G]$ such that $\Delta(x)=x\otimes1+1\otimes x$ and $x^p=0$ (this element is $T_0^{p^{r-1}}$ with the notation of the description of $k[G]$ given in Lemma \ref{Hopflagbras}), while $k[\alpha_p\times_k\alpha_p]$ has two $k$-linearly independent elements with this property. Therefore $G^\vee\subseteq W_{n'}^n$. Moreover, $V^r_{(G^\vee)^{(1/p)}}-F^s_{G^\vee}=(F^r_G-V^s_{G^{(1/p)}})^\vee=0$ and thus $$G^\vee\subseteq W_{n'}^n[V^r-F^s].$$ By Lemma \ref{Hopflagbras}, $$o\left(W_{n'}^n[V^r-F^s]\right)=p^{sn'}=p^{rn}=o(G)=o(G^\vee),$$ hence the equality.
\end{proof}

\begin{example}
Notice that in general it is not true that an infinitesimal commutative unipotent $k$-group scheme $H$ and its Cartier dual $H^\vee$ are contained in the same (minimal) number of copies of Witt vectors. More precisely, by Proposition \ref{infwitt} we know that there exist embeddings $H\subseteq \left(W_n^m\right)^r$ and $H^\vee\subseteq \left(W_{n'}^{m'}\right)^{r'}$. Take $n,m,r,n',m',r'$ minimal realizing the embeddings. By Cartier duality one has $n'=m$ and $m'=n$. Nevertheless, in general $r\neq r'$. Let us give an example of such a group scheme $H$. 
Consider the $k$-subgroup scheme of $W_2\times_kW_2$ given by $$G=\spec\left(k[T_0,T_1,U_0,U_1]/(T_0^p,T_1^p-U_0,U_0^p,U_1^p-T_0)\right)$$ and take $H$ to be the quotient $$G\twoheadrightarrow H=\spec\left(k[T_0,T_1,U_0]/(T_0^p,T_1^p-U_0,U_0^p)\right)\simeq \spec\left(k[T_0,T_1]/(T_0^p,T_1^{p^2})\right).$$ Then $H$ is a $k$-subgroup scheme of $W_2$, that is $r=1$. Let us show that $r'=2$, that is its dual cannot be embedded in just one copy of Witt vectors. 
    One can see that $G$ is self-dual setting $\widetilde{T}_0=T_1^*, \widetilde{T}_1=U_0^*,\widetilde{U}_0=U_1^*,\widetilde{U}_1=T_0^*$ and, as a consequence, we have $$H^\vee=\spec\left(k[\widetilde{T}_0,\widetilde{T}_1,\widetilde{U}_1]/(\widetilde{T}_0^p,\widetilde{T}_1^p,\widetilde{U}_1^p-\widetilde{T}_0)\right)\simeq\spec\left(k[X,Y]/(X^{p^2},Y^p)\right)$$ with comultiplication \begin{align*}
        \Delta\colon  X&\mapsto X\otimes1+1\otimes X,\\
        Y&\mapsto Y\otimes1+1\otimes Y+S_1(X^p\otimes1,1\otimes X^p)
    \end{align*} where $S_1(a,b)=\frac{a^p+b^p-(a+b)^p}{p}=-\sum_{k=1}^{p-1}\frac{1}{p}\binom{p}{k}a^kb^{p-k}$. Notice that $\ker(F_{H^\vee})\simeq\alpha_p\times_k\alpha_p$ and, by \cite[Lemma 2.28]{gouthier2023infinitesimal}, one deduces that $r'=2$.

    Let us conclude the example with a remark on the group scheme $G$:
    one can actually show that $G$ is isomorphic to $$\ker(F-V\colon W_2\rightarrow W_2)^2=\spec\left(k[X_0,X_1,Y_0,Y_1]/(X_0^p,X_1^p-X_0,Y_0^p,Y_1^p-Y_0)\right).$$ The isomorphism is explicitly given by \begin{align*}
        X_0&\mapsto T_0+U_0,\\
        X_1&\mapsto T_1+U_1+S_1(T_0,U_0),\\
        Y_1&\mapsto  -T_1+U_1+S_1(T_0,-U_0),\\
        Y_0&\mapsto T_0-U_0.
    \end{align*} As shown later on (see Proposition \ref{ellcurves}), over an algebraically closed field, this is the product of two copies of the $p$-torsion of a supersingular elliptic curve over $k$.
\end{example}

\section{Proof of the main theorem}

This section is devoted to the proof of Theorem \ref{classification}, which is given after two preliminary results.

\begin{lemma}\label{induction}
    Let $G$ be an infinitesimal commutative unipotent $k$-group scheme of order $p^n$ with one-dimensional Lie algebra. Then, up to (canonical) isomorphism, the Dieudonn\'e module of $\ker(F_G^{n-1})$ is given by the quotient $M(G)/M(G)\F^{n-1}.$ 
\end{lemma}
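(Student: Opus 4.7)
The approach is to apply the Dieudonné functor $M$ to the canonical factorization of the iterated relative Frobenius $F_G^{n-1}\colon G\to G^{(p^{n-1})}$ through its image, and then identify the resulting submodule of $M(G)$.

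First, I would consider the short exact sequence of infinitesimal commutative unipotent $k$-group schemes
$$0\to \ker(F_G^{n-1})\to G\to \Ima(F_G^{n-1})\to 0.$$
Since $M$ is an anti-equivalence of abelian categories, it is exact, so applying it yields a short exact sequence of left $E$-modules
$$0\to M(\Ima(F_G^{n-1}))\to M(G)\to M(\ker(F_G^{n-1}))\to 0,$$
with the second arrow canonically induced by the inclusion $\ker(F_G^{n-1})\hookrightarrow G$. Therefore the canonical isomorphism of the statement will be $M(\ker(F_G^{n-1}))\cong M(G)/M(\Ima(F_G^{n-1}))$, and it remains to identify the submodule $M(\Ima(F_G^{n-1}))\subseteq M(G)$ with $M(G)\F^{n-1}$.

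To this end, I would first observe that $M(\Ima(F_G^{n-1}))=\Ima(M(F_G^{n-1}))$ as submodules of $M(G)$. Indeed, applying $M$ to the factorization $G\twoheadrightarrow \Ima(F_G^{n-1})\hookrightarrow G^{(p^{n-1})}$ expresses $M(F_G^{n-1})\colon M(G^{(p^{n-1})})\to M(G)$ as the composition of the surjection $M(G^{(p^{n-1})})\twoheadrightarrow M(\Ima(F_G^{n-1}))$ with the injection $M(\Ima(F_G^{n-1}))\hookrightarrow M(G)$. It then suffices to show that $\Ima(M(F_G^{n-1}))=M(G)\F^{n-1}$.

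For this identification, I would unpack the definitions. The action of $\F$ on a representative $\phi\in \Hom(G,W_N^{N'})$ of an element of $M(G)$ is postcomposition with the Frobenius of $W_N^{N'}$, while $M(F_G^{n-1})$ sends $\psi\in \Hom(G^{(p^{n-1})},W_N^{N'})$ to $\psi\circ F_G^{n-1}$. The functoriality square of Frobenius then yields $F_{W_N^{N'}}^{n-1}\circ\phi=\phi^{(p^{n-1})}\circ F_G^{n-1}$, using that $W_N^{N'}$ is defined over $\F_p$ so that $(W_N^{N'})^{(p^{n-1})}=W_N^{N'}$. Since we may assume $k$ perfect, the twist $\phi\mapsto \phi^{(p^{n-1})}$ is a bijection $\Hom(G,W_N^{N'})\xrightarrow{\sim}\Hom(G^{(p^{n-1})},W_N^{N'})$, so passing to the direct limit gives $\F^{n-1} M(G)=\Ima(M(F_G^{n-1}))$, as desired. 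The main obstacle is precisely this last step: carefully reconciling the left $E$-module action of $\F^{n-1}$ on $M(G)$ with the contravariant action of $F_G^{n-1}$ on Dieudonné modules via the Frobenius twist $G\mapsto G^{(p^{n-1})}$, and in particular checking that this twist is a bijection on $\Hom$-sets under the perfectness hypothesis on $k$.
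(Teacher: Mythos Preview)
Your argument is correct. You apply $M$ to the short exact sequence $0\to\ker(F_G^{n-1})\to G\to\Ima(F_G^{n-1})\to 0$ and then identify the submodule $M(\Ima(F_G^{n-1}))\subseteq M(G)$ with $\F^{n-1}M(G)$ directly, via the Frobenius functoriality square $F_{W_N^{N'}}^{n-1}\circ\phi=\phi^{(p^{n-1})}\circ F_G^{n-1}$ together with the bijectivity of $\phi\mapsto\phi^{(p^{n-1})}$ over a perfect field. This is sound; note in particular that your computation also shows, a posteriori, that the set $\F^{n-1}M(G)$ is an $E$-submodule (as the image of the $E$-linear map $M(F_G^{n-1})$), which is not entirely obvious from the left-module structure alone. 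Your final line should read $\F^{n-1}M(G)$ or $M(G)\F^{n-1}$ consistently with the paper's notation, but this is purely cosmetic.

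The paper takes a different route. Rather than identifying $\Ima(M(F_G^{n-1}))$ directly, it first uses the hypothesis $\dim_k\Lie(G)=1$ to conclude $k[G]\simeq k[T]/(T^{p^n})$ as $k$-algebras, whence $F_G^{n-1}(G)\simeq\alpha_p$. Then, from the observation that $\F^{n-1}$ annihilates $M(\ker(F_G^{n-1}))$, it gets the containment $M(G)\F^{n-1}\subseteq M(\alpha_p)$, and concludes by a length comparison: $M(\alpha_p)$ has length one and $M(G)\F^{n-1}\neq 0$ since $\mathrm{ht}(G)=n$. Your approach is more general---it never uses the one-dimensional Lie algebra assumption and in fact proves $M(\ker(F_G^m))\cong M(G)/\F^m M(G)$ for any $m$---while the paper's argument is more hands-on and extracts along the way the concrete fact $F_G^{n-1}(G)\simeq\alpha_p$, which is reused later in the proof of Theorem~\ref{classification}.
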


\begin{proof}
We start by observing that, by Remark \ref{onenrmk}, $G$ has height $n$. Moreover, by Theorem \ref{infgrpstrctr}, $G\simeq\spec(k[T]/(T^{p^n}))$ as $k$-schemes and thus $$F_G^{n-1}(G)\simeq\spec(k[T^{p^{n-1}}]/(T^{p^n}))\simeq\spec(k[U]/(U^p)).$$ Since $F_G^{n-1}(G)$ is still unipotent, then $F_G^{n-1}(G)\simeq\alpha_p.$
    Therefore, we have the short exact sequence $$0\longrightarrow H\longrightarrow G\stackrel{F_G^{n-1}}{\longrightarrow}\alpha_p\longrightarrow0$$ where $H:=\ker(F_G^{n-1}).$ 
    Applying the (exact contravariant) Dieudonn\'e functor we obtain the short exact sequence $$0\longrightarrow M(\alpha_p)\stackrel{M(F_G^{n-1})}{\longrightarrow} M(G)\longrightarrow M(H)\longrightarrow0.$$ Now, $\F^{n-1}$ is zero in $M(H)$ and thus we have the factorization $$M(G)/M(G)\F^{n-1}\twoheadrightarrow M(H).$$ Therefore $M(G)\F^{n-1}$ is contained in the kernel of $M(G)\rightarrow M(H)$ that is (isomorphic to) $M(\alpha_p).$ Finally, since $M(G)\F^{n-1}\neq0$ and $M(\alpha_p)$ has length one, then also $M(G)\F^{n-1}$ has length one (and $M(G)\F^{n-1}\simeq M(\alpha_p)$). Therefore, $M(G)/M(G)\F^{n-1}$ is isomorphic to $M(H)$. 
\end{proof}

Let us recall the definition of \textit{Teichmüller lift} of an element of the field $k$.

\begin{defi}[Teichmüller lift]
    Let $a_0$ be an element of $k$. Then, the Teichmüller lift of $a_0$ is the element $[a_0]=(a_0,0,0,\dots)$ in the ring $W(k)$ of Witt vectors.
\end{defi}

Notice that if $a$ is the Teichmüller lift of $a_0$, then $a_0=a\mod p$.

\begin{prop}\label{Emodulesisom}
    Let $k$ be algebraically closed and $n\geq3$. Then for every $a\in W(k)$ and $i=1,\dots,n-2$ we have an isomorphism of $E$-modules $$E/\left(E(\V-\F^i-a\F^{n-1})+E\F^n\right)\simeq E/\left(E(\V-\F^i)+E\F^n\right).$$ Moreover, for every $a\in W(k)^\times$ it holds $$E/\left(E(\V-a\F^{n-1})+E\F^n\right)\simeq E/\left(E(\V-\F^{n-1})+E\F^n\right)$$ as $E$-modules.
\end{prop}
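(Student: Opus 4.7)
The plan is to view each side as a cyclic left $E$-module, $M:=E/(E(\V-\F^i-a\F^{n-1})+E\F^n)=Ee$ and $M':=E/(E(\V-\F^i)+E\F^n)=Ee'$, and to construct an isomorphism $\phi\colon M\to M'$ by specifying the image of the generator as $\phi(e)=ve'$ for a suitable $v\in E$; such a $\phi$ is a well-defined $E$-linear map exactly when $(\V-\F^i-a\F^{n-1})ve'=0$ and $\F^n ve'=0$ hold in $M'$.

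For the first isomorphism I would try the ansatz $v=1+c\F^{n-1-i}$ with $c\in W(k)$ to be determined (this is allowed because $1\leq i\leq n-2$). The condition $\F^n v e'=0$ is immediate since $\F^{2n-1-i}e'=0$. For the other condition, the commutation rules $\F\xi=\sigma(\xi)\F$, $\V\xi=\sigma^{-1}(\xi)\V$, $\V\F^j=\F^j\V=p\F^{j-1}$ combined with the defining relations $\V e'=\F^i e'$ and $\F^n e'=0$ (giving in particular $p\F^{n-2-i}e'=\F^{n-1-i}\V e'=\F^{n-1}e'$ and $\F^{2n-2-i}e'=0$) collapse the equation, after a short computation, to
\[
\bigl(\sigma^{-1}(c)-\sigma^i(c)-a\bigr)\F^{n-1}e'=0\quad\text{in }M'.
\]
Now $\F^{n-1}e'$ generates a simple $E$-submodule of $M'$ isomorphic to $M(\alpha_p)\simeq k$: it is annihilated by $\F$ (since $\F^n e'=0$), by $\V$ (since $\V\F^{n-1}e'=p\F^{n-2}e'=\F^{i+n-1}e'=0$ as $i\geq 1$), and by $p$. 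So the condition reduces to the scalar congruence $\sigma^{-1}(c)\equiv\sigma^i(c)+a\pmod{p}$. Reducing mod $p$ and substituting $c=\sigma(b)$ (using that $\sigma$ is an automorphism of $W(k)$), this becomes the Artin--Schreier-type equation $b-b^{p^{i+1}}=\bar a$ in $k$, separable of degree $p^{i+1}$, which admits a root because $k$ is algebraically closed.

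With such a $c$ chosen, $\phi(e)\equiv e'\pmod{\F M'}$, so $M'=E\phi(e)+\F M'$; iterating, using that $E\phi(e)$ is $\F$-stable, yields $M'=E\phi(e)+\F^k M'$ for all $k\geq 1$, and $\F^n M'=0$ forces $\phi$ to be surjective. Both $M$ and $M'$ have length $n$ over $E$ (direct inspection of the $\F$-adic filtration exhibits $n$ simple quotients, each of type $M(\alpha_p)$, matching the order $p^n$ of the corresponding group schemes), so $\phi$ is an isomorphism. For the second statement the same ansatz degenerates at $i=n-1$ to a scalar $v=\lambda\in W(k)^\times$, and an analogous computation reduces the well-definedness condition to $\sigma^{-1}(\lambda)\equiv a\sigma^{n-1}(\lambda)\pmod{p}$, equivalently $\bar\lambda^{p^n-1}=\bar a^{-p}$ in $k$, which admits a nonzero solution since $\bar a\in k^\times$ (this is precisely where the hypothesis $a\in W(k)^\times$ enters) and $k=\bar k$; any unit lift $\lambda\in W(k)^\times$ then furnishes the isomorphism. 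The main subtlety throughout is the bookkeeping of the $\sigma$-semilinear commutation rules; solvability of the resulting Artin--Schreier-type equations is automatic from algebraic closure.
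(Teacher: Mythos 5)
Your proposal is correct and follows essentially the same route as the paper: the same ansatz $1+c\F^{n-1-i}$ (resp.\ a unit scalar for $i=n-1$), the same reduction of well-definedness to an Artin--Schreier-type equation over $k$ solvable by algebraic closedness, and the same conclusion via equality of lengths. The only cosmetic difference is that you establish surjectivity by a Nakayama-style argument on the $\F$-adic filtration, whereas the paper checks injectivity directly on elements $\sum_k a_k\F^k$ with Teichm\"uller coefficients; both are fine.
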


\begin{proof}
 We will show that for any $i=1,\dots,n-2$
    \begin{align*}
       \varphi\colon  E/\left(E(\V-\F^i-a\F^{n-1})+E\F^n\right)&\rightarrow E/\left(E(\V-\F^i)+E\F^n\right)\\
        1&\mapsto 1+c\F^{n-1-i}
    \end{align*}
    is an isomorphism for a good choice of $c\in W(k).$
    First, we have to find under what conditions on $c$ is $\varphi$ well-defined. This is the case if and only if $\varphi(\V-\F^i-a\F^{n-1})=0.$ Now, $$\varphi(\V-\F^i-a\F^{n-1})=(\V-\F^i-a\F^{n-1})(1+c\F^{n-1-i})=$$$$\V-\F^i-a\F^{n-1}+(\V-\F^i-a\F^{n-1})c\F^{n-1-i}=-a\F^{n-1}+\sigma^{-1}(c)\V\F^{n-1-i}-\sigma^i(c)\F^{n-1}=$$$$-a\F^{n-1}+\sigma^{-1}(c)\F^{n-1}-\sigma^i(c)\F^{n-1}=(-a+\sigma^{-1}(c)-\sigma^i(c))\F^{n-1}$$ where $\sigma$ is the Frobenius morphism on $W(k).$ Let us denote $\gamma:= -a+\sigma^{-1}(c)-\sigma^i(c).$ Since $\F^{n}=0$, it is enough to find $c$ such that $\gamma=0\mod p$. We take $c=[c_0]$ to be the Teichm\"uller lift of a non-zero solution $c_0$ of the polynomial $X-X^{p^{i+1}}=a_0^p$, which exists since $k$ is algebraically closed. Therefore, the morphism $\varphi$ is well-defined. Let us show that $\varphi$ is injective. Since in the source of $\varphi$ we have $\V=\F^i+a\F^{n-1}$, a general element of the source is of the form $\sum_{j=0}^{n-1}b_j\F^j$ with $b_j\in W(k)$. Moreover, since $k$ is perfect, for any $\alpha=(\alpha_0,\alpha_1,\dots)\in W(k)$ we have $$\alpha=[\alpha_0]+(0,\alpha_1,\alpha_2,\dots)=[\alpha_0]+p\left(\alpha_1^{1/p},\alpha_2^{1/p},\dots\right)=[\alpha_0]+\left(\alpha_1^{1/p},\alpha_2^{1/p},\dots\right)p$$ and thus $$\alpha\F^j=[\alpha_0]\F^j+\left(\alpha_1^{1/p},\alpha_2^{1/p},\dots\right)\V\F^{j+1}=[\alpha_0]\F^j+\left(\alpha_1^{1/p},\alpha_2^{1/p},\dots\right)\F^{j+1+i}.$$ Since $\F^n=0$, repeating the argument for $\left(\alpha_1^{1/p},\alpha_2^{1/p},\dots\right)\F^{j+1+i}$ we see that for a general element $\sum_{j=0}^{n-1}b_j\F^j$ in the quotient we can suppose that $b_j$ is the Teichm\"uller lift of some element of the base field for any $j=0,\dots,n-1$. Suppose now that such an element maps to zero. Then we have $$0=\left(\sum_{j=0}^{n-1}b_j\F^j\right)(1+c\F^{n-1-i})=\sum_{j=0}^{n-1}b_j\F^j+\sum_{j=0}^{n-1}b_j\F^jc\F^{n-1-i}=$$$$\sum_{j=0}^{n-1}b_j\F^j+\sum_{j=0}^{n-1}b_j\sigma^j(c)\F^{n-1-i+j}=\sum_{j=0}^{n-1}b_j\F^j+\sum_{j=0}^{i}b_j\sigma^j(c)\F^{n-1-i+j},$$
    that is $$\sum_{j=0}^{n-1}b_j\F^j=-\sum_{j=0}^{i}b_j\sigma^j(c)\F^{n-1-i+j}.$$
    Multiplying recursively on the right by $\F^{n-l}$ for $l=1,\dots,n-1$ one obtains $b_j\F^{n-1}=0$ and since $b_j$ is a Teichm\"uller lift, this implies that $b_j=0$ for all $j=0,\dots,n-1.$ Therefore $\varphi$ is injective and since both the $E$-modules $E/\left(E(\V-\F^i-a\F^{n-1})+E\F^n\right)$ and $E/\left(E(\V-\F^i)+E\F^n\right)$ have length $n$ then $\varphi$ is an isomorphism.
    Let us conclude the proof showing that $$E/\left(E(\V-a\F^{n-1})+E\F^n\right)\simeq E/\left(E(\V-\F^{n-1})+E\F^n\right)$$ when $a\in W(k)^\times.$ Notice that in both $E$-modules we have $p=\F\V=\V\F=0$ since $\F^n=0$ and $\V=a\F^{n-1}$ or $\V=\F^{n-1}$. We can then assume that  $a=[\overline{a}]$ is the Teichm\"{u}ller lift of some element $\overline{a}\in k$. 
    We define the morphism $\psi$ sending $1$ to the Teichm\"uller lift $b=[\overline{b}]$ of a non-zero root $\overline{b}$ of the polynomial $\overline{a}^pX^{p^n}-X$, which exists since $k$ is algebraically closed. Remark that $\psi$ is well-defined since $$\V-a\F^{n-1}\mapsto(\V-a\F^{n-1})b=\sigma^{-1}(b)\V-a\sigma^{n-1}(b)\F^{n-1}=(\sigma^{-1}(b)-a\sigma^{n-1}(b))\F^{n-1}=0.$$ In fact, once more $\V\F^{n-1}=0$ and thus it is enough to verify that $\delta_0=0$ where $\delta:= \sigma^{-1}(b)-a\sigma^{n-1}(b)$, which is true since $$\delta_0=\left(\overline{b}-\overline{a}^p\overline{b}^{p^n}\right)^{1/p}=0.$$
    Finally, $\psi$ is surjective, since $$b^{-1}=\left[\overline{b}^{-1}\right]\mapsto 1$$ and thus an isomorphism. 
\end{proof}

We are now ready to prove Theorem \ref{classification}.

\begin{proof}[Proof of Theorem \ref{classification}]
    We argue by induction on $n.$ For $n=1$ the only infinitesimal commutative unipotent group scheme of order $p$ with one-dimensional Lie algebra is $\alpha_p$ and its Dieudonn\'e module is $E/(E\V+E\F).$ Suppose now that the statement is true for $n-1$ and let $G$ be an infinitesimal commutative unipotent $k$-group scheme of order $p^n$ with one-dimensional Lie algebra. Consider then the short exact sequence $$0\rightarrow\ker(F_G^{n-1})\rightarrow G\rightarrow F_G^{n-1}(G)\simeq\alpha_p\rightarrow0.$$ Then $\ker(F_G^{n-1})$ is a subgroup scheme of $G$ of order $p^{n-1}$ so by inductive hypothesis $$\ker(F_G^{n-1})=W_{n-1}^{n-1}[V-F^i]$$ for some $i=1,\dots,n-1.$ Equivalently we have a surjection \begin{equation}\label{surjection}
        M(G)\twoheadrightarrow M(G)/M(G)\F^{n-1}\simeq E/\left(E(\V-\F^i)+E\F^{n-1}\right)
    \end{equation} where $M(G)$ is the Dieudonn\'e module corresponding to $G$ and we know that $M(G)$ is an $E$-module of length $n$ (the isomorphism is given by Lemma \ref{induction}). The kernel of this surjection is thus $M(G)\F^{n-1}=k\F^{n-1}$, where the equality is due to the fact that $\F^{n}=0$. 
    Therefore, by (\ref{surjection}), it holds $\V-\F^i=a\F^{n-1}$ in $M(G)$ for some $a\in W(k)$ (and we can take $a=[\overline{a}]$ for some $\overline{a}\in k$). As a consequence we have that the surjection $$E/(E\V^n+E\F^n)\twoheadrightarrow M(G)$$ coming from the fact that $G\subseteq W_n^n$ (by \cite[Lemma 2.28]{gouthier2023infinitesimal} together with the last statement of Lemma \ref{infgrps}) factors via $$E/\left(E(\V-\F^i-a\F^{n-1})+E\F^n\right)\twoheadrightarrow M(G).$$ Finally, both the $E$-modules $M(G)$ and $E/\left(E(\V-\F^i-a\F^{n-1})+E\F^n\right)$ have length $n$ and thus $$M(G)\simeq E/\left(E(\V-\F^i-a\F^{n-1})+E\F^n\right).$$ The statement follows by Proposition \ref{Emodulesisom}. 
\end{proof}

\begin{rmk}
    Notice that as a consequence of the proof of Theorem \ref{classification} we have that, for $i=1,\dots,n-2$, $W_n^n[V-F^i]$ is the only infinitesimal commutative unipotent group scheme of order $p^n$ and with one-dimensional Lie algebra that is an extension of $\alpha_p$ by $W_{n-1}^{n-1}[V-F^i]$. On the other hand, there are exactly two infinitesimal commutative unipotent group schemes of order $p^n$ and with one-dimensional Lie algebra that are an extension of $\alpha_p$ by $\alpha_{p^{n-1}}$:  $W_n^n[V-F^{n-1}]$ and $\alpha_{p^n}$.
\end{rmk}

As a direct consequence of Theorem \ref{classification} we can describe all infinitesimal commutative unipotent $k$-group schemes whose Cartier dual have one-dimensional Lie algebra, over an algebraically closed field.

\begin{corollary}\label{cor: duality}
    Let $k$ be algebraically closed. For any $n\geq 1$, there are exactly $n$ non-isomorphic infinitesimal commutative unipotent $k$-group schemes $G$ of order $p^n$ such that $\dim_k(\Lie(G^\vee))=1$. They are the group schemes of the form $$W_n^n[F-V^i]:= \ker(F-V^i\colon W_n^n\rightarrow W_n^n)$$ for some $i=1,\dots,n$ and coincide with the Cartier duals of the $k$-group schemes classified by Theorem \ref{classification}. 
\end{corollary}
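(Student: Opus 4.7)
The plan is to deduce this statement directly from Theorem~\ref{classification} together with Lemma~\ref{duality}, via Cartier duality. I will first use that $G\mapsto G^{\vee}$ is an order-preserving anti-equivalence on the category of infinitesimal commutative unipotent $k$-group schemes. Under this anti-equivalence, the class of $G$ of order $p^n$ with $\dim_k\Lie(G^{\vee})=1$ corresponds bijectively to the class of $H:=G^{\vee}$ of order $p^n$ with one-dimensional Lie algebra. By Theorem~\ref{classification}, this latter class has exactly $n$ isomorphism classes, namely $W_n^n[V-F^i]$ for $i=1,\ldots,n$, and hence so does the former: the sought-after group schemes $G$ are precisely the Cartier duals $W_n^n[V-F^i]^{\vee}$ for $i=1,\ldots,n$.

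It then remains to identify $W_n^n[V-F^i]^{\vee}$ with $W_n^n[F-V^i]$. For this I would invoke Lemma~\ref{duality} with $r=i$ and $s=1$: it asserts that the Cartier dual of $W_{md}^{md}[F^r-V^s]$ is $W_{n'}^n[V^r-F^s]$, i.e.\ Cartier duality literally swaps $F$ with $V$ inside the kernel expression and transposes the Frobenius/Verschiebung-kernel indices of the ambient Witt-vector group. Writing $W_n^n[V-F^i]$ in the reduced form provided by Lemma~\ref{duality}(i), one reads off its dual as $W_n^{n'}[F-V^i]$ for the appropriate $n'$, and then checks that under the relation $F=V^i$ the further constraint $F^n=0$ is automatic from $V^n=0$ (since $F^n=V^{in}$), so that in fact $W_n^{n'}[F-V^i]=W_n^n[F-V^i]$.

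The main obstacle I anticipate is the index book-keeping between the classification's uniform form $W_n^n[V-F^i]$ and the lemma's normal form $W_{md}^{md}[F^r-V^s]$, which coincide only up to the reduction $W_{md}^{md}=W_n^{n'}$ of Lemma~\ref{duality}(i) and which require $m\geq 2$ there. A cleaner fallback, which I would use if the case analysis becomes cumbersome (in particular for $i=n$, where the hypothesis $m\geq 2$ forces one to embed into a strictly larger Witt-vector group and take a quotient on the dual side), is to argue entirely on the Dieudonn\'e module side: the proof of Theorem~\ref{classification} yields $M(W_n^n[V-F^i])=E/(E(\V-\F^i)+E\F^n)$, Cartier duality swaps $\F$ and $\V$, producing the module $E/(E(\F-\V^i)+E\V^n)$, and the same short-exact-sequence argument as in the proof of Lemma~\ref{duality} identifies this with the Dieudonn\'e module of $W_n^n[F-V^i]$.
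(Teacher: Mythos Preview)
Your approach is correct and coincides with the paper's: the paper proves this corollary in one line, simply citing Theorem~\ref{classification} together with Lemma~\ref{duality}, which is exactly your Cartier-duality argument. Your discussion of the $m\geq 2$ bookkeeping issue and the Dieudonn\'e-module fallback is in fact more careful than the paper, which leaves this to the reader.
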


\begin{proof}
    By the assumption and applying Theorem \ref{classification}, $G^\vee=W_n^n[V-F^i]$ for some $i=1,\dots,n$. What we need to do is prove that $G=\left(G^\vee\right)^\vee=W_n^n[F-V^i]$. By \cite[Lemma 2.28]{gouthier2023infinitesimal}, since $G^\vee$ has one-dimensional Lie algebra, $G\subseteq W_n^n$. Moreover, by the duality of Frobenius and Verschiebung (see Remark \ref{rmk: Frob and Ver duality}) we deduce that $G\subseteq W_n^n[F-V^i]$. We may now conclude by arguing that the two group schemes have equal order.
\end{proof}

\section{Infinitesimal subgroup schemes of supersingular elliptic curves}

The following proposition describes all infinitesimal subgroup schemes of any supersingular elliptic curve over an algebraically closed field. We recover in particular all their $p^n$-torsions as well: these are already well known and can be deduced, for example, from \cite[Section 5]{deJongOort} together with \cite[Theorem 1.2]{Oortpdiv}. On the other hand, to the author's knowledge, there was still some confusion regarding all (higher) Frobenius kernels of supersingular elliptic curves and Proposition \ref{ellcurves} clarifies this.

\begin{prop}\label{ellcurves}
    Let $k$ be algebraically closed and $E/k$ be a supersingular elliptic curve. Then, for every $n\geq1$ its $p^n$-torsion is $E[p^n]=W_{2n}^{2n}[V-F].$ More precisely, for every $m\geq1$ its $m$-th Frobenius kernel is $E[F^m]=W_m^m[F-V]$ and $E[p^n]$ coincides with $E[F^{2n}]$.
\end{prop}

\begin{proof}
    The $p^n$-torsion $E[p^n]$ of a supersingular elliptic curve $E$ over an algebraically closed field of characteristic $p>0$ is a finite commutative unipotent self-dual $k$-group scheme of order $p^{2n}$ with one-dimensional Lie algebra (see \cite[III.15, Theorem 1]{MumfordAbVar}). By Theorem \ref{classification} and Corollary \ref{cor: duality}, the only such $k$-group scheme is $W_{2n}^{2n}[F-V].$ Let us now consider the Frobenius kernels. Clearly $E[F^m]$ is a $k$-subgroup scheme of order $p^m$ of $E[p^m]$. More precisely, $$E[F^m]=E[p^m]\cap W_{2m}^m=W_m^m[F-V].$$ These can also be observed at the level of $k$-Hopf algebras: indeed $$E[p^m]=\spec\left(k[T_0,\dots,T_{2m-1}]/(T_0^p,T_1^p-T_0,\dots,T_{2m-1}^p-T_{2m-2})\right)$$ and $$E[F^m]=\spec\left(k[T_0,\dots,T_{2m-1}]/(T_0^p,T_1^p-T_0,\dots,T_{2m-1}^p-T_{2m-2},T_0^{p^m},\dots,T_{2m-1}^{p^m})\right)$$ with Hopf algebra structure given by the comultiplication of Witt vectors in both cases. Now, looking at $E[F^m]$ we see that $T_0=T_1^p=T_2^{p^2}=\dots=T_m^{p^m}=0$, $T_1=\dots=T_{m+1}^{p^m}=0,\dots, T_{m-1}=\dots=T_{2m-1}^{p^m}=0$, that is $$E[F^m]=\spec\left(k[T_{m},\dots,T_{2m-1}]/(T_{m}^p,T_{m+1}^p-T_{m},\dots,T_{2m-1}^p-T_{2m-2})\right)$$$$=W_m^m[F-V].$$
So we notice that for $m=2n$ even, it holds that $E[F^{2n}]$ coincides with $E[p^{n}]$.
\end{proof}

\begin{rmk}
Let us point out that, as a consequence of Theorem \ref{classification} and Proposition \ref{ellcurves}, the Frobenius kernels of supersingular elliptic curves are exactly the infinitesimal commutative unipotent self-dual $k$-group schemes with one-dimensional Lie algebra. In addition, these are exactly all the infinitesimal subgroup schemes $G$ of a supersingular elliptic curve $E$ over an algebraically closed field. Indeed, suppose that such a $G$ has height $n$, then $G\subseteq E[F^n]$ and by order reasons they must coincide. 
\end{rmk}

Notice that if an infinitesimal commutative unipotent $k$-group scheme $G$ with $n$-dimensional Lie algebra can be embedded in a smooth connected $n$-dimensional algebraic group $\mathcal{G}$, then $G$ acts freely on it (by multiplication). Brion \cite[\textsection1]{brion2022actions} asked if there are examples of generically free rational actions on curves of infinitesimal commutative unipotent group schemes that are not subgroup schemes of a smooth connected one-dimensional algebraic group. Recall that if $\mathcal{G}$ is a smooth connected one-dimensional $k$-algebraic group, then, either $\mathcal{G}$ is affine and $\mathcal{G}_{\overline{k}}\simeq\G_{m,\overline{k}}$, or $\mathcal{G}_{\overline{k}}\simeq\G_{a,\overline{k}}$, or $\mathcal{G}$ is an elliptic curve. The following proposition explains that if $k$ is algebraically closed, very few infinitesimal commutative unipotent $k$-group schemes with one-dimensional Lie algebra are contained in smooth connected one-dimensional algebraic groups. All the others are examples of infinitesimal group schemes that act rationally and generically freely on any curve (by \cite[Theorem 1.4]{gouthier2023infinitesimal}), but are not subgroup schemes of a smooth connected one-dimensional algebraic group.

\begin{prop}\label{guciinellcur}
    Let $k$ be algebraically closed and $G$ be an infinitesimal commutative unipotent $k$-group scheme with one-dimensional Lie algebra. Then $G$ is contained in a smooth connected one-dimensional algebraic group if and only if either $G\simeq\alpha_{p^n}\subseteq\G_a$ or $G$ is an infinitesimal subgroup scheme of a supersingular elliptic curve. 
\end{prop}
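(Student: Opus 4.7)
The plan is to apply Theorem \ref{classification} to reduce to determining, for each $i \in \{1, \ldots, n\}$, whether $G \simeq W_n^n[V - F^i]$ embeds into a smooth connected one-dimensional $k$-algebraic group. Over algebraically closed $k$, such a group is either $\G_m$, $\G_a$, or an elliptic curve.

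For the ``if'' direction, the case $i = n$ uses $F^n = 0$ on $W_n^n$ to rewrite $W_n^n[V - F^n] = \ker(V|_{W_n^n}) \simeq \alpha_{p^n}$, which sits inside $\G_a$. The case $i = 1$ uses the iterated Verschiebung $V^n \colon W_n \hookrightarrow W_{2n}$, sending $(x_0, \ldots, x_{n-1})$ to $(0, \ldots, 0, x_0, \ldots, x_{n-1})$: a short coordinate verification shows it intertwines $V - F$, so it restricts to an embedding $W_n^n[V - F] \hookrightarrow W_{2n}^{2n}[V - F] = E[p^n]$ for any supersingular elliptic curve $E$ (Corollary \ref{ellcurves}).

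For the ``only if'' direction, the cases $\mathcal{G} = \G_m$ and $\mathcal{G}$ ordinary elliptic are excluded because the identity component of their $p$-divisible group is isomorphic to $\widehat{\G}_m$, which is multiplicative and admits no non-trivial unipotent infinitesimal subgroup schemes. When $\mathcal{G} = \G_a$, a standard argument on Hopf ideals in $k[t]$ shows every infinitesimal subgroup scheme is of the form $\alpha_{p^m}$; matching orders yields $G \simeq \alpha_{p^n}$, i.e., $i = n$.

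The main obstacle is the case of a supersingular elliptic curve. Here I would argue via Dieudonn\'e modules: $G \subseteq E[p^m] = W_{2m}^{2m}[V - F]$ for $m$ sufficiently large (Corollary \ref{ellcurves}), so Lemma \ref{duality} presents $M(G)$ as a quotient of $M(E[p^m]) = E/(E(\V - \F) + E\F^{2m})$, whence the relation $\V = \F$ holds in $M(G)$. But $M(G) = E/(E(\V - \F^i) + E\F^n)$ also forces $\V = \F^i$, so $\F(\F^{i-1} - 1) = 0$ on $M(G)$. For $i \geq 2$ the nilpotence of $\F$ makes $1 - \F^{i-1}$ a unit in $\End(M(G))$ (with inverse $\sum_{k \geq 0} \F^{k(i-1)}$, finite modulo $\F^n$), so $\F$ would act as zero on $M(G)$; but $\F \cdot 1 \neq 0$ for $n \geq 2$, a contradiction. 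Hence $i = 1$ (the case $n = 1$ giving $G = \alpha_p = W_1^1[V - F]$ directly). The preliminary Lemma \ref{partiintere} supports an alternative order-counting route, since $p = \V \F$ acts as $\F^{i+1}$ on $M(G)$ so that $G$ is $p^{\lceil n/(i+1) \rceil}$-torsion and $p^n = |G| \leq p^{2\lceil n/(i+1) \rceil}$ then contradicts the lemma for $i \geq 2$ in the generic range.
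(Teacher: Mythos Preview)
Your argument is correct. The ``if'' direction matches the paper's proof (you additionally spell out the embedding $W_n^n[V-F]\hookrightarrow W_{2n}^{2n}[V-F]$ via the iterated Verschiebung, which the paper leaves implicit). The difference lies in the ``only if'' direction for elliptic curves.

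The paper does not split into ordinary and supersingular: it argues uniformly that for $2\le i\le n-1$ and $n>2$ one has $p=F_G^{i+1}$, hence $G\subseteq E[p^s]$ with $s=\lceil n/(i+1)\rceil$, and then Lemma~\ref{partiintere} gives $n>2s$, contradicting $|E[p^s]|=p^{2s}$. You instead exclude ordinary $E$ via the multiplicativity of its infinitesimal part, and for supersingular $E$ you pass to Dieudonn\'e modules: the surjection $M(E[p^m])\twoheadrightarrow M(G)$ forces $\V=\F$ on $M(G)$, which combined with $\V=\F^i$ gives $\F(1-\F^{i-1})=0$; since $1-\F^{i-1}$ acts invertibly (as $\F$ is nilpotent) you conclude $\F=0$, contradicting $\mathrm{ht}(G)=n\ge 2$. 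You also note the order-counting route as an alternative, which is exactly the paper's approach. Your Dieudonn\'e argument is slightly more structural and avoids the auxiliary Lemma~\ref{partiintere}, while the paper's order count is more elementary and does not require knowing the Dieudonn\'e module of $E[p^m]$ or separating the ordinary case. Both are clean; yours uses more of the machinery already set up in the paper.
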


\begin{proof}
  By Theorem \ref{classification}, $G\simeq W_n^n[V-F^i]$ for some $n\geq1$ and $i=1,\dots,n$.
  Let us start by considering the cases $i=1,n$ for any $n\geq 1$. For $i=1$ we have $G\simeq W_n^n[F-V]$ which is the $n$-th Frobenius kernel of a supersingular elliptic curve (Proposition \ref{ellcurves}). For $i=n$, then $G\simeq W_1^n=\alpha_{p^n}\subseteq\G_a$.
  
  Let us show that if $n>2$ and $i=2,\dots,n-1$ then $G$ is not contained in a smooth connected one-dimensional algebraic group. 
    Under these assumptions, clearly $G$ is neither a subgroup of $\G_m$ (since $G$ is unipotent) nor of $\G_a$ (since $V_G\neq0$). Therefore, if $G$ is a subgroup of a smooth connected one-dimensional algebraic group, then it is a subgroup of an elliptic curve $E$. In particular, since $G$ is infinitesimal unipotent, $E$ has to be supersingular and $G$ is one of its infinitesimal subgroup schemes. This gives a contradiction since, by Proposition \ref{ellcurves}, those are obtained exactly for $i=1$. 
\end{proof}

We conclude by addressing the question of whether all infinitesimal unipotent group schemes with one-dimensional Lie algebra are commutative, raised by both Fakhruddin and Brion (see \cite[Remark 2.10]{Fakhruddin} and \cite[\textsection1]{brion2022actions}). The following example shows that this is not the case.

\begin{example}\label{noncommutative}
    Consider the infinitesimal unipotent non-commutative $k$-group scheme $G=\spec(A)$ where $$A=k[T_0,T_1]/\left(T_0^{p^n}, T_1^p-T_0\right)$$ with $n\geq2$ an integer and comultiplication given by $$\Delta(T_0)=T_0\otimes1+1\otimes T_0$$ and $$\Delta(T_1)=T_1\otimes1+1\otimes T_1+T_0^{p^{n-1}}\otimes T_0^{p^{n-2}}.$$ In this case $$A^\vee=$$$$k\langle U_0\dots,U_{n}\rangle/(U_0^p,\dots,U_n^p,U_iU_j-U_jU_i,U_nU_{n-1}-U_{n-1}U_n-U_0)_{i,j=0,\dots n,(i,j),(j,i)\neq(n,n-1)}$$ where $U_0(T_1)=1$ and $U_i(T_0^{p^{i-1}})=1$ and zero elsewhere. The Hopf algebra $A^\vee$ is non-commutative: the only non-commutative relation is given by $U_nU_{n-1}-U_{n-1}U_n=U_0$, while its comultiplication is defined on the $U_i$'s as for the Witt vectors (notice that this makes sense since $U_0,\dots,U_{n-1}$ commute). 
    These examples arise as closed subgroup schemes of non-commutative extensions of $\G_a$ by itself (see \cite[II.\textsection3, 4]{DG}) and there are many of them. Other examples can be found in \cite{gouthier2024unexpected}. In particular $G$ is a subgroup scheme of $\mathrm{PGL}_{2,k}$ when $k$ has characteristic $2$ and $n=3$ (see \cite[Theorem 1.1]{gouthier2024unexpected}).
\end{example}

\bibliographystyle{alpha}
\bibliography{bib}

@misc{brion2022actions,
      title={Actions of finite group schemes on curves}, 
      author={Michel Brion},
      year={2022},
      eprint={2207.08209},
      archivePrefix={arXiv},
      primaryClass={math.AG}
}

@book {Montgomery,
    AUTHOR = {Montgomery, Susan},
     TITLE = {Hopf algebras and their actions on rings},
    SERIES = {CBMS Regional Conference Series in Mathematics},
    VOLUME = {82},
 PUBLISHER = {Published for the Conference Board of the Mathematical
              Sciences, Washington, DC; by the American Mathematical
              Society, Providence, RI},
      YEAR = {1993},
     PAGES = {xiv+238},
      ISBN = {0-8218-0738-2},
   MRCLASS = {16W30},
  MRNUMBER = {1243637},
MRREVIEWER = {E. J. Taft},
       DOI = {10.1090/cbms/082},
       URL = {https://doi.org/10.1090/cbms/082},
}

@book {MumfordAbVar,
    AUTHOR = {Mumford, David},
     TITLE = {Abelian varieties},
    SERIES = {Tata Institute of Fundamental Research Studies in Mathematics},
    VOLUME = {5},
      NOTE = {With appendices by C. P. Ramanujam and Yuri Manin,
              Corrected reprint of the second (1974) edition},
 PUBLISHER = {Tata Institute of Fundamental Research, Bombay; by Hindustan
              Book Agency, New Delhi},
      YEAR = {2008},
      ISBN = {978-81-85931-86-9; 81-85931-86-0},
   MRCLASS = {14Kxx},
  MRNUMBER = {2514037},
}

@book {DG,
    AUTHOR = {Demazure, Michel and Gabriel, Pierre},
     TITLE = {Groupes alg\'{e}briques. {T}ome {I}: {G}\'{e}om\'{e}trie
              alg\'{e}brique,
 g\'{e}n\'{e}ralit\'{e}s, groupes
              commutatifs.},
      NOTE = {Avec un appendice {\itshape Corps de classes local} par Michiel
              Hazewinkel.},
 PUBLISHER = {Masson \& Cie, \'{E}diteurs, Paris and North-Holland Publishing
              Co., Amsterdam},
      YEAR = {1970},
     PAGES = {xxvi+700},
   MRCLASS = {14L15 (20G35)},
  MRNUMBER = {302656},
MRREVIEWER = {J.-E.\ Bertin},
}

@book {Milne,
    AUTHOR = {Milne, J. S.},
     TITLE = {Algebraic groups},
    SERIES = {Cambridge Studies in Advanced Mathematics},
    VOLUME = {170},
      NOTE = {The theory of group schemes of finite type over a field},
 PUBLISHER = {Cambridge University Press, Cambridge},
      YEAR = {2017},
     PAGES = {xvi+644},
      ISBN = {978-1-107-16748-3},
   MRCLASS = {14L15 (14-01 17B45 20-01 20G15)},
  MRNUMBER = {3729270},
MRREVIEWER = {Boris\ \`E.\ Kunyavski\u{\i}},
       DOI = {10.1017/9781316711736},
       URL = {https://doi.org/10.1017/9781316711736},
}

@article {NWW,
    AUTHOR = {Nguyen, Van C. and Wang, Linhong and Wang, Xingting},
     TITLE = {Classification of connected {H}opf algebras of dimension
              {$p^3$} {I}},
   JOURNAL = {J. Algebra},
  FJOURNAL = {Journal of Algebra},
    VOLUME = {424},
      YEAR = {2015},
     PAGES = {473--505},
      ISSN = {0021-8693,1090-266X},
   MRCLASS = {16T05},
  MRNUMBER = {3293230},
MRREVIEWER = {Shixun\ Lin},
       DOI = {10.1016/j.jalgebra.2014.09.022},
       URL = {https://doi.org/10.1016/j.jalgebra.2014.09.022},
}

@book {Oort,
    AUTHOR = {Oort, F.},
     TITLE = {Commutative group schemes.},
    SERIES = {},
 PUBLISHER = {Springer-Verlag, Berlin-New York},
      YEAR = {1966},
     PAGES = {vi+133 pp. (not consecutively paged)},
   MRCLASS = {14.50 (18.00)},
  MRNUMBER = {213365},
MRREVIEWER = {S.\ S.\ Shatz},
}

@article {Fakhruddin,
    AUTHOR = {Fakhruddin, Najmuddin},
     TITLE = {Finite group schemes of essential dimension one},
   JOURNAL = {Doc. Math.},
  FJOURNAL = {Documenta Mathematica},
    VOLUME = {25},
      YEAR = {2020},
     PAGES = {55--64},
      ISSN = {1431-0635,1431-0643},
   MRCLASS = {14L15 (12G05 14G17)},
  MRNUMBER = {4077548},
MRREVIEWER = {B.\ Sury},
       DOI = {10.4171/DM/737},
}

@article{Oortpdiv,
 ISSN = {0003486X},
 URL = {http://www.jstor.org/stable/3597323},
 author = {Frans Oort},
 journal = {Annals of Mathematics},
 number = {2},
 pages = {1021--1036},
 publisher = {Annals of Mathematics},
 title = {Minimal p-Divisible Groups},
 urldate = {2024-12-17},
 volume = {161},
 year = {2005}
}

@article {deJongOort,
    AUTHOR = {de Jong, A. J. and Oort, F.},
     TITLE = {Purity of the stratification by {N}ewton polygons},
   JOURNAL = {J. Amer. Math. Soc.},
  FJOURNAL = {Journal of the American Mathematical Society},
    VOLUME = {13},
      YEAR = {2000},
    NUMBER = {1},
     PAGES = {209--241},
      ISSN = {0894-0347,1088-6834},
   MRCLASS = {14L05 (14B05)},
  MRNUMBER = {1703336},
MRREVIEWER = {Nobuo\ Tsuzuki},
       DOI = {10.1090/S0894-0347-99-00322-7},
       URL = {https://doi.org/10.1090/S0894-0347-99-00322-7},
}

@online {Pink,
    AUTHOR = {Pink, Richard},
     TITLE = {Finite group schemes},
     NOTE = {Lecture course in WS 2004/05, ETH Z\"urich},
      YEAR = {2005}}

@article {gouthier2023infinitesimal,
    AUTHOR = {Gouthier, Bianca},
     TITLE = {Infinitesimal rational actions},
   JOURNAL = {Trans. Amer. Math. Soc.},
  FJOURNAL = {Transactions of the American Mathematical Society},
    VOLUME = {378},
      YEAR = {2025},
    NUMBER = {9},
     PAGES = {6485--6533},
      ISSN = {0002-9947},
   MRCLASS = {14L15 (14L17 14L30 16T05 16T10)},
  MRNUMBER = {4941122},
       DOI = {10.1090/tran/9413},
       URL = {https://doi.org/10.1090/tran/9413},
}

@misc{gouthier2024unexpected,
      title={Unexpected subgroup schemes of {$\mathrm{PGL}_{2,k}$} in characteristic 2}, 
      author={Bianca Gouthier and Dajano Tossici},
      year={2024},
      eprint={2403.09469},
      archivePrefix={arXiv},
      primaryClass={math.AG}
}

@incollection {pries2006short,
    AUTHOR = {Pries, Rachel},
     TITLE = {A short guide to {$p$}-torsion of abelian varieties in
              characteristic {$p$}},
 BOOKTITLE = {Computational arithmetic geometry},
    SERIES = {Contemp. Math.},
    VOLUME = {463},
     PAGES = {121--129},
 PUBLISHER = {Amer. Math. Soc., Providence, RI},
      YEAR = {2008},
      ISBN = {978-0-8218-4320-8},
   MRCLASS = {11G15 (11G10 14K10)},
  MRNUMBER = {2459994},
MRREVIEWER = {Adrian\ Vasiu},
       DOI = {10.1090/conm/463/09051},
       URL = {https://doi.org/10.1090/conm/463/09051},
}

\end{document}